\newtheorem{theorem}{Theorem}
\newtheorem{lemma}[theorem]{Lemma}
\newtheorem{proposition}[theorem]{Proposition}
\newtheorem*{corollary*}{Corollary}
\newtheorem{remark}[theorem]{Remark}
\DeclareMathOperator{\re}{Re}
\newcommand{\sgn}{\operatorname{sgn}}
\newcommand{\ol}[1]{\overline{#1}}
\newcommand{\sm}{\smallsetminus}
\newcommand{\Ww}{{W}}
\newcommand{\e}{\mathbf{e}}
\newcommand{\Q}{{\mathbb{Q}}}
\newcommand{\C}{{\mathbb{C}}}
\newcommand{\HH}{{\mathbb{H}}}
\DeclareMathOperator{\SL}{SL}
\title[Convolution identities for divisor sums and modular forms]{Convolution identities for divisor sums\\ and modular forms}
\author{Ksenia Fedosova, Kim Klinger-Logan, and Danylo Radchenko}
\newcommand{\arXiv}[1]{arXiv:\href{https://arxiv.org/abs/#1}{\texttt{#1}}}
\renewcommand{\sectionautorefname}{\S{}\@gobble}
\def\equationautorefname~#1\null{(#1)\null}
\begin{document}

\maketitle
\begin{abstract} We prove exact identities for convolution sums of divisor functions of the form 
$\sum_{n_1 \in \mathbb{Z} \smallsetminus \{0,n\}}
\varphi(n_1,n-n_1)\sigma_{2m_1}(n_1)\sigma_{2m_2}(n-n_1)$
where $\varphi(n_1,n_2)$ is a Laurent polynomial with logarithms for which the sum is absolutely convergent. 
Such identities are motivated by computations in string theory and prove and generalize a conjecture of Chester, Green, Pufu, Wang, and Wen from \cite{CGPWW}. Originally, it was suspected that such sums, suitably extended to $n_1\in\{0,n\}$ should vanish, but in this paper we find that in general they give Fourier coefficients of holomorphic cusp forms.
\end{abstract}

\section{Introduction}
\label{sec:intro}

In this paper we establish an identity giving a relationship between convolution sums of divisor functions $\sigma_r(n):=\sum_{d|n}d^r$  with $r\in2\mathbb{Z}_{\geq 0}$ and Fourier coefficients of Hecke eigenforms. For example, our main result implies that for   $n \neq 0$, 
\begin{equation}\label{eq:tauid}
\sum_{\stackrel{n_1,n_2 \in \mathbb{Z} \smallsetminus \{0\}}{n_1+n_2=n}}   \psi(n_1, n_2)\sigma_2(n_1)\sigma_2(n_2) =-  (\zeta(2) n^2 + 420 \zeta'(-2) )\sigma_2(n) - \frac{75 L(\Delta, 6) \tau(|n|)}{8 L(\Delta, 5)|n|^3},   
\end{equation}
where $\psi(n_1,n_2) = \tfrac{1}{2} (\tilde{\psi}(n_1,n_2)+\tilde{\psi}(n_2, n_1))$ for $\tilde{\psi}(n_1,n_2)$ defined as 
\begin{equation}\label{eq:psi} 
\begin{split} 
\frac{1}{(n_1+n_2)^3}
&\left(\frac{n_{1}^{5}}{n_2^2} + \frac{35n_{1}^{4}}{n_2} - 1099 n_{1}^{3} + 1575 n_{1}^{2}n_{2}  + (420 n_{1}^{3} - 2100 n_{1}^{2} n_{2})\log |\tfrac{n_1}{n_2}| \vphantom{\frac{n_{1}^{5}}{n_2^2}} \right), \nonumber
\end{split}    
\end{equation}
and $L(\Delta, s)$ is the $L$-function of the weight 12 cusp form $\Delta(z) = \sum_{n \ge 1} \tau(n) e^{2 \pi i n z}$. Such an identity is unexpected, and as far as the authors are aware, the only known relationship between divisor functions and the Ramanujan $\tau$ function involves finite sums of odd index divisor functions~\cite{Ram}. Generally, shifted convolution sums are of number theoretic interest due to their connection to moments of and subconvexity bounds for $L$-functions \cites{BTB, Blomer, CFKRS}.
However, identities such as \eqref{eq:tauid} would be difficult to discover outside of their natural context and, in this case, the investigation of the particular weighted sums is motivated by string theory. 
	
Specifically, sums of the form \eqref{eq:tauid} appear as part of the low energy expansion of the 4-graviton scattering amplitude as well as related calculations in the $\mathcal{N}=4$ Super-Yang-Mills (SYM) gauge theory via the AdS/CFT correspondence \cite{ACDGW, CGPWW, DT, GMRV2010,GMV2015, GRV2010}. 
On the one hand, the appearance of holomorphic cusp forms in this context is unanticipated as they do not appear in corresponding localized computations \cite{ACDGW}. On the other hand, when computing the full integrated correlator, the exact identity established in Theorem \ref{thm:main1} allows one to see that these cusp forms {\it exactly} cancel. This cancellation suggests that the large-$N$ expansion of certain integrals of the correlator of super conformal primary operators in the $\mathcal{N}=4$ stress tensor multiplet can be written as lattice sums~\cite{ACDGW}.

%The appearance of holomorphic cusp forms in divisor sums of this kind has deep consequences in gravitational interactions of superstring theory \cite{ACDGW, DT}. 

Our work was originally motivated by a conjecture from string theory of Chester, Green, Pufu, Wang, and Wen in \cite[Section C.1(a)]{CGPWW} that a particular shifted convolution sum vanishes, and Theorem \ref{thm:main1}  proves this conjecture. Explicitly, their conjecture can be written as
\begin{equation}\label{eq:conj}
\begin{split}
\displaystyle\sum_{\stackrel{n_1, n_2 \in \mathbb{Z} \smallsetminus \{ 0\}}{n_1+n_2=n}} \varphi(n_1,n_2) & \sigma_2(n_1) \sigma_2(n_2) 
=
\left( \frac{\zeta(2) n^2}{2} + 30 \zeta'(-2)\right)\sigma_2(n) ,
\end{split}
\end{equation}
where 
%    \begin{align*}   \varphi(n_1,n_2)= & \,30 -\frac{n^2}{4 n_1^2}-\frac{n^2}{4 n_2^2}-\frac{3 n}{n_1}-\frac{3 n}{n_2}+15 \left(1-\frac{2 n_1}{n}\right) \log |n_1|+15 \left(1-\frac{2 n_2}{n}\right) \log |n_2|.  \end{align*}
\begin{equation} 
\begin{split}
\varphi(n_1,n_2)= & -\frac{n_1^2}{4 n_2^2}-\frac{7 n_1}{2 n_2}-\frac{n_2^2}{4 n_1^2}-\frac{7 n_2}{2 n_1}+\frac{47}{2} 
+\left(15-\frac{30 n_1}{n_1+n_2}\right) \log \left|\frac{n_1}{n_2}\right|.
\end{split}
\end{equation}
 
Note that, unlike~\eqref{eq:tauid}, there is no term involving Fourier coefficients of a cusp form in~\eqref{eq:conj}. This conjecture arose from the fact that a constant multiple of the summation (plus the negation of the right side) in \eqref{eq:conj} appears in the homogeneous part
of the Fourier expansion of a translation invariant solution to the equation
\begin{equation}\label{eq:DE}
(\Delta-12)f(z) = -\left(2\zeta(3) E_{3/2}(z)\right)^2,
\end{equation}
where $\Delta = y^2\left(\partial_x^2+\partial_y^2\right)$ on $\operatorname{SL}_2(\mathbb{Z})\backslash \operatorname{SL}_2(\mathbb{R})/\operatorname{SO}_2(\mathbb{R})=\Gamma\backslash\mathbb{H}$ and $E_s(z)$ is the non-holomorphic Eisenstein series
\begin{equation*}E_{s}(z)=\sum_{\gamma\in (B\cap \Gamma)\backslash \Gamma} \text{Im}(\gamma z)^{s}, \quad \text{for Re}(s)>1\,
\end{equation*}
for $B$ the Borel subgroup in $\SL_2(\mathbb{R})$ fixing $\infty\in\overline{\mathbb{H}}$.
Such solutions $f(z)$ to \eqref{eq:DE} give the  $D^6\mathcal{R}^4$ coefficient of the low energy expansion of the 4-graviton scattering amplitude in 10-dimensional type IIB string theory \cite{GMRV2010,GMV2015, GRV2010}. For physical reasons, it was expected that the homogenous solution should vanish. 
	
In \cite{KMR} a formal argument was given for \eqref{eq:conj}; however, the argument relied on evaluating a double Dirichlet series outside its region of convergence, and the authors were unable to make this argument rigorous. Beyond looking for a rigorous proof of the conjecture, it is natural to ask if other convolution sums similar to \eqref{eq:conj} might also hold. Explicitly, we can generate a family of convolution sums via computing homogeneous parts % KF: it was "the homogeneous part", but as, I believe, they are different to
% each of the differential equations, one should use the plural. Please feel free to correct!
of the Fourier expansion of the solutions to differential equations of the form
\begin{equation}\label{eq:DE_general}
(\Delta-s(s+1))f(z) = E_{a}(z)E_{b}(z)
\end{equation}
for other values $a,b$, and $s$ in $\mathbb{R}$ on $\Gamma\backslash\mathbb{H}$. Solutions to \eqref{eq:DE_general} appear in calculations in $\mathcal{N}=4$ SYM gauge theory \cite{DT} and physicists wondered whether the corresponding sums also vanish, giving idenities similar to \eqref{eq:conj}. Numerical evidence told a surprisingly different story.  In fact, instead of vanishing, these sums yield Fourier coefficients of modular forms as we will see in the statement of Theorem~\ref{thm:main1}.

%\medskip
%\section{Main result}
\label{sec:main}
To state our main result precisely, recall the definition of Jacobi functions of the second kind (see~\cite[p.~172]{E1} for $x\in\mathbb{C}\smallsetminus[-1,1]$,
\cite[\S4.61]{Sz}):
	\begin{equation}
\label{eq:jacobi-def}
Q_{d}^{(\alpha,\beta)}(x)=\frac{(x-1)^{-\alpha}(x+1)^{-\beta}}{2^{d+1}}\int_{-1}^{1}\frac{(1-t)^{d+\alpha}(1+t)^{d+\beta}dt}{(x-t)^{d+1}},
\end{equation}
which we extend to $x\in(-1,1)$ by setting
$Q_{d}^{(\alpha,\beta)}(x)=\frac12(Q_{d}^{(\alpha,\beta)}(x+i0)+Q_{d}^{(\alpha,\beta)}(x-i0))$. We will only discuss $Q_d^{(\alpha,\beta)}$ for $\alpha,\beta\in\mathbb{Z}_{\ge0}$ (more precisely, for $\alpha, \beta \in 2 \mathbb{Z}_{\ge 0}$), in which case the expression on the right in~\eqref{eq:jacobi-def} is single-valued in the cut plane $\C\sm[-1,1]$ and defines an elementary function (see Proposition~\ref{prop4} below).
	
%Recall the definition of Jacobi functions of the second kind (see~\cite[p.~172]{E1},
%\cite[\S4.61]{Sz}):
%	\begin{equation}
		%    \label{eq:jacobi-def}
		%    Q_{n}^{(\alpha,\beta)}(x) =
		%	\frac{(x-1)^{-\alpha}(x+1)^{-\beta}}{2^{n+1}}
		%	\int_{-1}^{1}\frac{(1-t)^{n+\alpha}(1+t)^{n+\beta}dt}{(x-t)^{n+1}}\,,\qquad
		%	x\in\C\sm[-1,1]\,,
		%    \end{equation}
	%which we extend to $x\in(-1,1)$ by setting
	%$Q_{n}^{(\alpha,\beta)}(x)=\frac12(Q_{n}^{(\alpha,\beta)}(x+i0)+Q_{n}^{(\alpha,\beta)}(x-i0))$. We will only discuss $Q_n^{(\alpha,\beta)}$ for $\alpha,\beta\in\mathbb{Z}_{\ge0}$, in which case the expression on the right in~\eqref{eq:jacobi-def} is single-valued in the cut plane $\C\sm[-1,1]$.
	
\begin{theorem} \label{thm:main1}
Let $d\in  \mathbb{Z}_{>0}$ and $r_1,r_2\in 2\mathbb{Z}_{\geq 0}$. Then for any $n\in \mathbb{Z}_{>0}$,  
\begin{equation} 
\label{eq:mainid}
   \begin{split}
& \sum_{\substack{n_1,n_2\in\mathbb{Z}\sm\{0\}\\n_1+n_2=n}}
Q_{d}^{(r_1,r_2)}\Big(\frac{n_2-n_1}{n_1+n_2}\Big)\sigma_{r_1}(n_1)\sigma_{r_2}(n_2)  = (-1)^dZ_{d}^{(r_1,r_2)}(n)\sigma_{r_1}(n) - Z_{d}^{(r_2,r_1)}(n)\sigma_{r_2}(n) + \frac{a_n}{n^d}\,,
\end{split}
\end{equation}
		where
\begin{align*}Z_{d}^{(\alpha,\beta)}(n) = \left\{
\begin{array}{lr}\frac{(\beta-1)!(\alpha+d)!}{2(\alpha+\beta+d)!}\zeta(\beta)n^{\beta} + 
\binom{d+\beta}{d}\frac{\zeta'(-\beta)}{2}\,, & \beta\neq 0, \\[10pt]
\frac{1}{4}\left(H_{d+\alpha}+H_d-\log \left|4 \pi ^2
				n\right|\right)\,, & \beta=0,
\end{array}\right.
\end{align*}
where $H_d$ is the $d$-th harmonic number
%  \[Z_{d}^{(r_1,r_2)}(n) = \frac{(r_2-1)!(r_1+d)!}{2(r_1+r_2+d)!}\zeta(r_2)n^{r_2} + 	\binom{d+r_2}{d}\frac{\zeta'(-r_2)}{2}\,,\]
and $h(\tau):=\sum_{m\ge1}a_m q^m$ is a cusp form of weight \[k:=2d+r_1+r_2+2\] on $\mathrm{SL}_2(\mathbb{Z})$, given by $h = \sum_{f}\lambda_f f$, where $f$ runs over 
normalized Hecke eigenforms\footnote{We say that a Hecke eigenform is normalized if its first non-zero Fourier coefficient is equal to 1.} of weight $k$ and level~$1$, and
%	\[	\lambda_f = \frac{\pi (-1)^{r_1/2+d+1}}{2^{k+2}}\binom{k-2}{d}	\frac{L^{\star}(f,d+1)L^{\star}(f,r_2+d+1)}{\langle f,f\rangle}\,.\]
\[
\lambda_f = \frac{\pi (-1)^{d+r_2/2+1}}{2^{k}}\binom{k-2}{d}
\frac{L^{\star}(f,d+1)L^{\star}(f,r_1+d+1)}{\langle f,f\rangle}\,.\]
% For $r_2=0$, \[ Z_d^{(r_1,0)}(n) =\frac{(-1)^{-r_1/2} }{4}\left(H_{d+r_1}+H_d-\log \left(4 \pi ^2  n\right)\right)\]where $H_d$ is the $d$-th harmonic numberand for $r_1=r_2=0$, a similar statement holds with the only difference that we need to put \[ Z_{d}^{(0,0)}(n) = \tfrac{1}{4} (2 H_d- \log \left(4 \pi ^2 n\right)).\]
Here $\langle f,g\rangle:=\int_{\Gamma\backslash\HH}f(z)\ol{g(z)}y^{k-2}dxdy$ 
denotes the Petersson inner product, and $L^\star(f, \cdot)$ is the completed $L$-function of $f$:
\[L^\star(f,s) = (2 \pi)^{-s} \Gamma(s) L(f,s).\]
\end{theorem}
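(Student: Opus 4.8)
The plan is to realize the weighted convolution sum as a Fourier coefficient of an explicit holomorphic modular form of weight $k=2d+r_1+r_2+2$ and then to separate its ``degenerate'' (Eisenstein-type) part from its genuinely cuspidal part. First I would feed the integral representation \eqref{eq:jacobi-def} (and its elementary evaluation, Proposition~\ref{prop4}) into the summand. Setting $x=\tfrac{n_2-n_1}{n}$ and changing variables collapses the Jacobi function into a one-dimensional beta-type integral: for $n_1+n_2=n$ one obtains an identity of the shape
\[
Q_d^{(r_1,r_2)}\!\Big(\tfrac{n_2-n_1}{n}\Big)\,\sigma_{r_1}(n_1)\sigma_{r_2}(n_2)
=\frac{n^{\,r_1+r_2+d+1}}{2}\int_0^1
\frac{u^{d+r_1}(1-u)^{d+r_2}}{(nu-n_1)^{d+1}}\,
\frac{\sigma_{r_1}(n_1)}{n_1^{r_1}}\,\frac{\sigma_{r_2}(n_2)}{n_2^{r_2}}\,du,
\]
with the pole at $u=n_1/n$ handled by the same principal-value prescription that defines $Q_d^{(r_1,r_2)}$ on $(-1,1)$. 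The key observation is that this is precisely the archimedean integral appearing in the holomorphic projection formula, with kernel $y^{k-2}e^{-2\pi n y}$: writing the divisor functions $\sigma_{r_i}(n_i)$ as Fourier coefficients of (real-analytic) Eisenstein series exhibits the full sum $S(n):=\sum_{n_1+n_2=n}Q_d^{(r_1,r_2)}(\tfrac{n_2-n_1}{n})\sigma_{r_1}(n_1)\sigma_{r_2}(n_2)$, up to an explicit elementary factor, as the $n$-th Fourier coefficient of the holomorphic projection $\pi_{\mathrm{hol}}\Phi$ of an explicit non-holomorphic weight-$k$ automorphic form $\Phi$ assembled from a pair of Eisenstein series. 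Since $\pi_{\mathrm{hol}}\Phi\in M_k(\SL_2(\Z))=\mathrm{Eis}_k\oplus S_k$, the sum decomposes accordingly.

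Next I would compute the non-cuspidal part directly. The terms proportional to $\sigma_{r_1}(n)$ and $\sigma_{r_2}(n)$ come from the degenerate Fourier modes of $\Phi$ — the constant terms of its Eisenstein factors together with the contribution of the excluded indices $n_1\in\{0,n\}$ that the abstract singles out. Evaluating the corresponding beta-integrals against the Eisenstein Fourier coefficients produces the numbers $Z_d^{(\alpha,\beta)}(n)$. The dichotomy $\beta\neq0$ versus $\beta=0$ is exactly the distinction between a convergent zeta value $\zeta(\beta)$ and a boundary value that must be replaced by its regularized counterpart; the appearance of $\zeta'(-\beta)$, of $\log|4\pi^2 n|$, and of the harmonic numbers $H_{d+\alpha},H_d$ is the signature of differentiating the Eisenstein series in its spectral parameter at the degenerate point. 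This step is in essence a residue and constant-term computation.

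It then remains to identify the cuspidal part $h=\sum_f\lambda_f f$ by projecting onto the normalized Hecke eigenbasis, $\lambda_f=\langle\pi_{\mathrm{hol}}\Phi,f\rangle/\langle f,f\rangle=\langle\Phi,f\rangle/\langle f,f\rangle$, the last equality because holomorphic projection is adjoint to the inclusion of cusp forms. I would evaluate $\langle\Phi,f\rangle$ by the Rankin--Selberg method: unfolding $\overline{f}$ against the Eisenstein series inside $\Phi$ replaces the integral over $\Gamma\backslash\HH$ by an integral over the strip that factors as a product of two Mellin transforms of $f$. These are exactly the completed $L$-values $L^{\star}(f,d+1)$ and $L^{\star}(f,r_1+d+1)$, the two shifts $d+1$ and $r_1+d+1$ being produced by the two exponents $u^{d+r_1}$ and $(1-u)^{d+r_2}$ in the beta-integral of the first step. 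The binomial $\binom{k-2}{d}$ and the archimedean constant $\tfrac{\pi(-1)^{d+r_2/2+1}}{2^{k}}$ (the sign coming from the $i^{r_2}$-factors, $r_2\in2\Z_{\ge0}$) then assemble from the $\Gamma$-factors and powers of $2\pi$ in this unfolding.

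The main obstacle is that the ``naive'' double series $\sum_{n_1+n_2=n}\sigma_{r_1}(n_1)\sigma_{r_2}(n_2)$, and the associated double Dirichlet series, fail to converge in the region where the clean Rankin--Selberg factorization is valid — this is precisely where the formal argument of \cite{KMR} broke down. The role of the Jacobi weight $Q_d^{(r_1,r_2)}$ is exactly to render the sum absolutely convergent (as in the abstract), so the entire argument must be organized around this weighted sum and its analytic continuation, with careful control of the principal-value/boundary contributions as $n_1\to0$ and $n_2\to0$ and of the interchanges of summation with both the beta-integral and the $y$-integral of the holomorphic projection. Making this regularization rigorous — equivalently, justifying the holomorphic projection of a form of borderline growth — is the technical heart of the proof, and it is also what forces the two-case structure of $Z_d^{(\alpha,\beta)}$ and the logarithmic terms.
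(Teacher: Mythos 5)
Your proposal is correct and takes essentially the same route as the paper: it realizes the weighted sum as the $n$-th Fourier coefficient of the holomorphic projection of a product of two completed non-holomorphic Eisenstein series of weight $k=2d+r_1+r_2+2$, identifies the archimedean integral with $Q_d^{(r_1,r_2)}$ (the paper's Lemma~\ref{lem:Wconvolution}, proved there via Mellin transforms of Whittaker functions), obtains the $Z_d^{(\alpha,\beta)}(n)$ terms from the degenerate (constant-term) modes, and computes the cuspidal part from $\langle \widetilde{\Phi},f\rangle$ by Rankin--Selberg unfolding (the paper invokes Diamantis--O'Sullivan for this inner product). The one point worth noting is that the regularization you describe as the technical heart is not actually needed: choosing the completed series $E^{*}_{2k_1}(z,r_1/2+1/2)E^{*}_{2k_2}(z,r_2/2+1/2)y^{-k_1-k_2}$ with $2k_1+2k_2=k$ makes the form decay like $y^{-d}$ at the cusp, so the standard holomorphic projection lemma applies directly.
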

	
For $n<0$, the identity \eqref{eq:mainid} remains true if we write $\frac{a_{|n|}}{|n|^d}$ instead of $\frac{a_n}{n^d}$.% {\color{blue}Check this. Does this become $a_{|n|}$??}{\color{magenta} Sanity check: let $n<0$, then \begin{align*}\sum_{n_1+n_2 = n}    Q_{d}^{(r_1,r_2)}\Big(\frac{n_2-n_1}{n_1+n_2}\Big)\sigma_{r_1} & \stackrel{n_2' = -n_2, n_1' = -n_1}{=}  \sum_{n_1'+n_2' = -n}    Q_{d}^{(r_1,r_2)}\Big(\frac{-n_2'+n_1'}{-n_1'-n_2'}\Big)\sigma_{r_1} (n_1')\sigma_{r_2}(n_2') \\&= \sum_{n_1'+n_2' = -n}    Q_{d}^{(r_1,r_2)}\Big(\frac{n_2'-n_1'}{n_1'+n_2'}\Big)\sigma_{r_1} (n_1')\sigma_{r_2}(n_2') \\& = (-1)^dZ_{d}^{(r_1,r_2)}(-n)\sigma_{r_1}(-n) - Z_{d}^{(r_2,r_1)}(-n)\sigma_{r_2}(-n) + \frac{a_{-n}}{(-n)^d}.\end{align*} By parity of $r_1$ and $r_2$ the function $Z^{(r_1,r_2)}(\cdot)$ is even, thus we do not have to change it. However, $d$ might be odd.
		%}
	
%{\color{blue} [D: by the functional equation for $L$ we have\[(-1)^{r_1/2+d+1}L^*(f,r_2+d+1)=(-1)^{r_2/2}L^*(f,r_1+d+1),\] which would give a bit nicer formula for $\lambda_f$.]}

To see how this statement applies to solutions of differential equations of the form~\eqref{eq:DE_general}, we note that for $a,b\in 1/2+\mathbb{Z}_{>0}$ and $s$ large enough, the homogeneous solution to \eqref{eq:DE_general} %with $\lambda=s(s+1)$ 
%\textbf{(KF: $\lambda$ was a bit clashing with $\lambda_f$, so I just got rid of $\lambda$)}
is given by \[\displaystyle \sum_{n\in\mathbb{Z}}\alpha_n \sqrt{y} K_{s+1/2}(2\pi |n|y)e^{2\pi i n x}\] for  $K_{s}(z)$ the modified Bessel function of the second kind  and $\alpha_n$ is a multiple of 
\begin{equation*}
\begin{split}
 \sum_{\substack{n_1,n_2\in\mathbb{Z}\sm\{0\}\\n_1+n_2=n}}&
Q_{d}^{(r_1,r_2)}\Big(\frac{n_2-n_1}{n_1+n_2}\Big)\sigma_{r_1}(n_1)\sigma_{r_2}(n_2)+(-1)^{d+1}Z_{d}^{(r_1,r_2)}(n)\sigma_{r_1}(n) + Z_{d}^{(r_2,r_1)}(n)\sigma_{r_2}(n) ,
\end{split}
\end{equation*}
where $r_1=2a-1$, $r_2=2b-1$, and $d=s+1-a-b$ (see~\cite{FKL}). 

In what remains of the introduction we will discuss other work related to convolution sums of divisor functions. In Section \ref{sec:examples}, we give a corollary of Theorem \ref{thm:main1} which expresses \eqref{eq:mainid} in terms of polynomials and logarithms as opposed to Jacobi functions of the second kind. We will also provide some examples of identities of the form in \eqref{eq:mainid} and discuss the ramifications of Theorem~\ref{thm:main1} in physics. In Section \ref{sec:jacobi-properties}, we will establish properties of the Jacobi function with integer parameters which we later use in the proof of our main result. In Section \ref{sec:proof_main}, we will first prove some integral identities involving Whittaker functions. We then provide a precise statement of the Holomorphic Projection Lemma which we will then use to prove Theorem~\ref{thm:main1}. %Finally, in Section~\ref{sec:asym} we give explicit asymptotics for the tail of the series in~\eqref{eq:mainid}.
	
\subsection{Related Results}
Before the full theory of modular forms had been developed, Jacobi, Glaisher \cite{Glaisher}, and Ramanujan \cite{Ram} examined sums involving  divisor functions.
The formula 
\begin{equation}\label{Jac}
\sum_{n=1}^{N-1}\sigma_3(n)\sigma_3(N-n) =\frac{1}{120}(\sigma_7(N)-\sigma_3(N))
\end{equation} was attributed to %\textbf{KF: Since we cannot find the source, maybe the word ''attributed'' will look better.} 
Jacobi\footnote{We were unable to find a primary source for this result.} \cite{Motohashi}. 
In 1885, Glaisher gave expressions for the first five powers of the generating series of the sum of divisors function $\sigma_1(n)$ \cite{Glaisher}. Motivated by generalizing \eqref{Jac}, Ramanujan manipulated what would later be known as the Eisenstein series $E_2$, $E_4$, and $E_6$ to study finite convolution sums of odd divisor functions \cite[p. 136--162]{Ram}. 
Later in 1969, Lahiri found identities involving sums of $\sigma_1$ shifted by pentagonal numbers \cite{Lahiri}.% {\color{blue}(D: the paper is from 1946, is there a typo? Also, I couldn't find the paper, are there really identities involving $\sigma_0$ there?)}.
	
Generally, identities similar to \eqref{Jac} involving {\it finite} convolution sums of odd divisor functions can be found using holomorphic Eisenstein series by computing the Fourier coefficients of their products or Rankin-Cohen brackets~\cite[p.\,18\,\&\,56]{Zagier123}.  Many other examples of such formulas were derived in a recent work of O'Sullivan \cite{OS2} using holomorphic projection. However, none of these identities treat even index divisor functions nor infinite sums of them as in \eqref{eq:mainid}.
	
Perhaps more closely related to this work, Diamantis proved that one can express quotients of values of $L$-functions associated to a normalized cusp form in terms of certain shifted convolution sums~\cite[Theorem~1.1]{D}. The formula in \eqref{eq:mainid} gives an explicit form of such an expression. Motohashi gives a representation in terms of spectral data
%\textbf{KF: Is '' spectral identity'' a standing term? Or one better says ''gives a representation in terms of spectral data''? }
of a weighted sum of divisor functions ${\sum_{n=1}^\infty \sigma_0(n) \sigma_0(n+f)W(n/f)}$ where $f\geq 1$ and $W~\in~C_0^\infty(\mathbb{R}_{>0})$ \cite[Theorem 3]{Motohashi}.
	
%There is likely a way to adopt an argument similar to that of Motohashi  by using Estermann zeta functions to get the expression in terms of Kloosteman sums and then use the Kuznetsov trace formula in order to get a different proof of Theorem \ref{thm:main1}. Heuristically following this path, one can see how the continuous spectrum will likely vanish and the normalization of Hecke eigenfunctions appears; however, making such an  argument rigorous is challenging for analytical reasons.
	
\subsection{Corollaries and Examples}\label{sec:examples}
As they appear in string theory, it is not obvious that these divisor sums can be expressed in terms of Jacobi functions. It may be useful to instead think of these weightings as combinations of %by KF: I didn't understand why "by" is needed there... 
polynomials in $n_j$, $1/n_j,$ %and $ 1/n$ KF: edited because in the formulation below, $n$ is not present 
and $\log|n_j|$ for $j=1,2$.
	
{\corollary{\label{cor}For $r_1,r_2\in 2\mathbb{Z}_{\geq 0}$, $d\in\mathbb{Z}_{>0}$ and $n>0$, 
let  
\begin{equation}
\label{eq:phidef}
\begin{split}
\varphi(n_1,n_2) :=& \sum_{j=-r_1}^{d-1}  A_j n_1^j + \sum_{j=-r_2}^{d-1} B_j n_2^j  + \sum_{j=0}^d \left( C_j n_1^j \log|n_1| + D_j n_2^j \log |n_2| \right)  
\end{split}
\end{equation}
be such that $\varphi(n_1,n-n_1)=O(n_1^{-d-r_1-r_2-1})$ for $n_1 \to \pm \infty$. % and fixed $n>0$. KF: n>0 is already fixed above
Then for $n_1+n_2 = n$, 
\[\varphi(n_1,n_2) =   \Gamma^{(r_1,r_2)}_d Q_d^{(r_1,r_2)}\left(\frac{n_2-n_1}{n_2+n_1}\right)
\]
and thus
\begin{equation*}
\begin{split}
& \sum_{\substack{n_1+n_2=n\\n_1n_2\ne 0}}  \varphi(n_1,n_2) \sigma_{r_1} (n_1) \sigma_{r_2}(n_2) 
=\Gamma^{(r_1,r_2)}_d \cdot \left[(-1)^dZ_{d}^{(r_1,r_2)}(n)\sigma_{r_1}(n) - Z_{d}^{(r_2,r_1)}(n)\sigma_{r_2}(n) + \frac{a_n}{n^d}\right]
\end{split}    			
\end{equation*}		    
where $Z_{d}^{(\alpha,\beta)}$ and $a_n$ are as defined in Theorem \ref{thm:main1}, and  \[\Gamma^{(r_1,r_2)}_d: = (-1)^{d+1} n^dC_d  \frac{2 d! (r_1+r_2+d)!}{(r_1+r_2+2d)!},\]
where $C_d$ is as in \eqref{eq:phidef}.
}}	
For $r_1=r_2=0$ or $r_1 r_2 \neq 0$, the first two terms on the right hand side of \eqref{eq:mainid} coincide with the ones predicted formally in \cite{FKL2}. 
In the case when there are no cuspforms of weight $k:=2d+r_1+r_2+2$, the predictions given by the formal arguments in \cite{FKL2, KMR} are proven by Theorem \ref{thm:main1}. Explicitly, when specifying $r_1=r_2=2$ and $d=1$, since there are no cusp forms of weight 8, 
Corollary~\ref{cor} proves the identity~\eqref{eq:conj} as originally conjectured in~\cite{CGPWW}. Moreover, Corollary~\ref{cor} proves the conjectures in~\cite{FKL2} by showing that when $r_1=r_2=0$ and $d=1$, we have
\begin{equation}
\begin{split}
 \sum_{\stackrel{n_1, n_2 \in \mathbb{Z} \smallsetminus \{ 0\}}{n_1+n_2=n}}& \sigma_0(n_1)\sigma_0(n_2) \Big[  \frac{n_2-n_1}{n} \log \left|  \frac{n_1}{n_2}  \right|  + 2 \Big]  =(2-\log \left(4 \pi ^2 |n|\right) ) \sigma_0(n),    
\end{split}		
\end{equation}
		and, when $r_1=r_2=0$ and $d=3$, we get that 
		\begin{equation*}
\begin{split}
 \sum_{\stackrel{n_1,n_2 \in \mathbb{Z}\smallsetminus \{ 0 \} }{n_1+n_2=n}} \sigma_0(n_1)   \sigma_0(n_2) \psi_1(n_1,n_2)= \left(11-3\log(4\pi^2|n|)\right)\sigma_0(n),
\end{split}
\end{equation*}
		where $\psi_1(n_1,n_2) $ equals 
		% \begin{align*}\psi_1(n_1,n_2) = & \, 11 -60 n_1 n^{-1}+60 n_1^2 n^{-2}  \\
			% &  +\left(3 -36 n_1 n^{-1}+90 n_1^2 n^{-2}-60 n_1^3 n^{-3}\right) \log \left|n_1\right| \\
			%   & +\left(3 -36 n_2 n^{-1}+90 n_2^2 n^{-2}-60 n_2^3 n^{-3}\right) \log \left|n_2%\right|.
			% \end{align*}
		%{\color{blue} In terms of $\log|n_1/n_2|$:
\begin{equation*}
\begin{split}
 11 - \frac{60n_1n_2}{n^2} - \frac{3n_1^3-27n_1^2n_2+27n_1n_2^2-3n_2^3}{n^3}\log \left|\frac{n_1}{n_2}\right|
\end{split}
\end{equation*}%}
		for $n=n_1+n_2$.
		
		Furthermore, when analyzing the homogeneous solution to \eqref{eq:DE_general} for $a=b=3/2$ and $s=5$, a constant multiple of  \[\sum_{\stackrel{n_1,n_2 \in \mathbb{Z} \smallsetminus\{0\}}{n_1+n_2=n}}   \psi(n_1, n_2)\sigma_2(n_1)\sigma_2(n_2)\]
		for  $\psi$ as in \eqref{eq:psi} appears.
		Using Corollary \ref{cor} when $r_1=r_2=2$ and $d=3$, one sees that \eqref{eq:tauid} holds.
		As a final example, in the homogeneous solution to \eqref{eq:DE_general} for $a=3/2$, $b=5/2$ and $s=11$ involves a constant multiple of the sum \begin{align}\label{eq:psi2sum}\sum_{\stackrel{n_1,n_2 \in \mathbb{Z}\smallsetminus \{ 0 \}}{n_1+n_2=n}} \sigma_2(n_1)\sigma_4(n_2) \psi_2(n_1, n_2),
		\end{align} where 
		\begin{align*}
\psi_2(n_1,n_2)= &\, \frac{7106 n_1^7}{n^7}-\frac{22287 n_1^6}{n^6}+\frac{84626 n_1^5}{3 n^5}-\frac{110789n_1^4}{6 n^4}\\
    &+\frac{33286 n_1^3}{5 n^3}-\frac{3893 n_1^2}{3 n^2}+\frac{2614 n_1}{21 n}
			-\frac{1727}{420}+\frac{n}{63n_1}\\ &  +\frac{n^2}{8190n_1^2}
			-\frac{22 n}{63 n_2}  -\frac{11 n^2}{1365n_2^2}
			-\frac{n^3}{4095n_2^3}-\frac{n^4}{180180 n_2^4}
			\\ &
			-\Big(\frac{11n_1^8}{n^8}-\frac{176n_1^7n_2}{n^8}
			+\frac{924n_1^6n_2^2}{n^8}-\frac{2112n_1^5n_2^3}{n^8}
			+\frac{2310n_1^4n_2^4}{n^8} \\
			& \ \ \ \ \ \ \ \ \  -\frac{1232n_1^3n_2^5}{n^8} +\frac{308n_1^2n_2^6}{n^8}-\frac{32n_1n_2^7}{n^8}
			+\frac{n_2^8}{n^8}\Big)\log \Big|\frac{n_1}{n_2}\Big|.
		\end{align*}
		Using  $r_1=2$, $r_2=4$, and $d=8$ in Corollary \ref{cor}, we get that \eqref{eq:psi2sum} is equal to 
		\begin{align}
		\left(\frac{\zeta(4)}{180180}n^4+\frac{33\zeta(5)}{4\pi^4}\right)\sigma_2(n)+\left(-\frac{\zeta(2)}{8190}n^2+\frac{\zeta(3)}{4\pi^2}\right)\sigma_4(n)+\frac{a_n }{n^8},
		\end{align}
		where
  \begin{equation*}
      \begin{split}
        \displaystyle\sum_{n\geq 1} a_n e^{2\pi i n z} =& \,\frac{L(9,f_1)}{168 L(8,f_1)}\left(-29+\frac{3551}{\sqrt{144169}}\right) f_1(z) +\frac{L(9,f_2)}{168 L(8,f_2)}\left(-29-\frac{3551}{\sqrt{144169}}\right) f_2(z)  
      \end{split}
  \end{equation*}
		and 
		\begin{align*}%\label{eq:f1}
			f_1(z) &= e^{2\pi i z} + (540-12\sqrt{144169})e^{4\pi i z} + \dots \\%\label{eq:f2} 
			f_2(z) &= e^{2\pi i z} + (540+12\sqrt{144169})e^{4\pi i z} +  \dots \end{align*}
		are normalized Hecke eigenforms of weight 24.

		One implication of Theorem \ref{thm:main1} is that certain linear combinations of generalized Eisenstein series $\mathcal{E}(s,a,b,z,\bar z)$ (i.e., solutions to \eqref{eq:DE_general}) %where $\lambda=s(s+1)$) 
		arising in physics have no cuspidal components. 
		Such linear combinations of generalized Eisenstein series occur when examining the regularized large $N$ expansion of certain integrated correlators in $SU(N)$ $\mathcal{N}=4$ SYM theory.
		Specifically, it was recently understood that the four superconformal primary operators in the $\mathcal{N}=4$ stress tensor multiplet are obtained from derivatives of the partition $Z$ of the mass-deformed $SU(N)$ $\mathcal{N}=4$ SYM theory placed on a squashed four-sphere \cite{BCPW, CP}.
		For the partition function $Z$,  
		the $N^{-3}$-term of $\partial_m^4\log Z|_{m=0,b=1}$   (see \cite[(2.13)]{CGPWW}) is given~by 
		\begin{equation}\label{eq:physN-3}
	\begin{split}
	\alpha_3\, &\mathcal{E}\left(3,\frac{3}{2},\frac{3}{2}, z,\bar z\right)+\sum_{r=5,7,9}\left[ \alpha_r\, \mathcal{E}\left(r,\frac{3}{2},\frac{3}{2}, z,\bar z\right)  +\beta_r\, \mathcal{E}\left(r,\frac{5}{2},\frac{5}{2}, z,\bar z\right)+\gamma_r\,\mathcal{E}\left(r,\frac{7}{2},\frac{3}{2}, z,\bar z\right)\right]    
	\end{split}		
   	\end{equation} 
		where 
		$\alpha_i,\beta_i$ and $\gamma_i$ are all constants defined in \cite[(2.14)]{CGPWW}.  
		
		When $r=5$, we expect the pieces of $\mathcal{E}\left(r,\frac{3}{2},\frac{3}{2}, z,\bar z\right)$, $ \mathcal{E}\left(r,\frac{5}{2},\frac{5}{2}, z,\bar z\right),$ and $\mathcal{E}\left(r,\frac{7}{2},\frac{3}{2}, z,\bar z\right)$ which correspond to the homogeneous solution to \eqref{eq:DE_general} will contain $L$-values and Fourier coefficients of the weight $12$ cusp form\footnote{e.g., in the first case, $r_1 = r_2 = 2, d=3$ and thus $k = 2d+r_1+r_2+2 = 12$.}~$\Delta$. However, the linear combination of these terms appearing in~\eqref{eq:physN-3} vanishes. 
		To see this, let $L^\star(s) := L^\star(\Delta, s)$ and $L(s) := L(\Delta, s)$ and note that $\langle \Delta, \Delta \rangle$ is a common denominator in all terms, and thus we can omit it from the consideration. Moreover, the $Z_d^{(r_1, r_2)}$ terms will vanish as they simply contribute to the cases when $n_1 n_2 =0$. Thus it suffices to consider the sum 
		%\begin{align*}
		%\left. \frac{(-1)^{d+\frac{r_2}{2}+1} L^*(d+1)  L^*(d+r_1+1) \binom{2 d+r_1+r_2}{d}}{2^{2 d+r_1+r_2+2}} \right|_{r_1 = r_2 = 2, d=3} \cdot  \alpha_5 \cdot \frac{4032 }{5 \pi ^4 }\\
		%+ \left. \frac{(-1)^{d+\frac{r_2}{2}+1} L^*(d+1) L^*(d+r_1+1) \binom{2 d+r_1+r_2}{d}}{2^{2 d+r_1+r_2+2}} \right|_{r_1 = r_2 = 4, d=1} \cdot  \beta_5 \cdot \frac{7168}{5 \pi ^2} \\
		%+ \left.\frac{(-1)^{d+\frac{r_2}{2}+1} L^*(d+1) L^*(d+r_1+1) \binom{2 d+r_1+r_2}{d}}{2^{2 d+r_1+r_2+2}} \right|_{r_1 =2, r_2 %= 6, d=1} \cdot \gamma_5 \cdot \frac{3072}{5 \pi ^2},
		%\end{align*}
		\begin{align*}
			\frac{4032}{5 \pi ^4} \alpha_5 D(2,2,3)  
			+ \frac{7168}{5 \pi ^2} \beta_5  D(4,4,1) 
			+ \frac{3072}{5 \pi ^2} \gamma_5 D(2,6,1),
		\end{align*}
		for
		\begin{equation*}
		    \begin{split}
		D(r_1, r_2, d) :=\, &(-1)^{d+\frac{r_2}{2}+1} 2^{-(2 d+r_1+r_2+2)}  L^{\star}(d+1)  L^{\star}(d+r_1+1) \binom{2 d+r_1+r_2}{d}
		    \end{split}
		\end{equation*}
and 
		$\displaystyle
		\alpha_5 = - \frac{135}{52 \pi ^3}$, $\displaystyle\beta_5 = - \frac{30375}{832 \pi ^5}$, and $\displaystyle\gamma_5 = - \frac{42525}{832 \pi ^5}
		$ \cite[(2.14)]{CGPWW}.
		After a substitution it suffices to check 
		that 
		\begin{align*}
&-\frac{382725 L(2) L(4)}{53248 \pi ^{13}}-\frac{1148175 L(4) L(6)}{26624 \pi ^{17}} +\frac{3189375 L(2) L(6)}{53248 \pi ^{15}}
\end{align*}
vanishes, that can be done with the help of \cite{Manin}.

		We note that the cuspidal contribution to \eqref{eq:physN-3} also vanishes when $r=7$ and $r=9$ confirming   physical heuristics.
		In fact, in \cite{ACDGW}, the authors found that that for higher order terms in the $1/N $ expansion for the integrated correlator the cuspidal terms cancel as in \eqref{eq:physN-3}, implying that these terms can be represented as lattice sums. This result further suggests that there should be a more optimal choice than generalized Eisenstein series as a basis for such computations.

%		\subsection{Acknowledgments}
		%The authors would like to thank Daniele Dorigoni, Michael Green, Axel Kleinschmidt, Stephen D. Miller, and Don Zagier for their many helpful conversations. K-L acknowledges support from the NSF through grants DMS-2001909 and DMS-2302309. KF is partially funded by the Deutsche Forschungsgemeinschaft (DFG, German Research Foundation) under Germany's Excellence Strategy EXC 2044-390685587, Mathematics M\"unster: Dynamics-Geometry-Structure.
		
		\section{Jacobi functions with integer parameters}
		\label{sec:jacobi-properties}
		
		We note that in the case $\alpha,\beta, d\in\mathbb{Z}_{\ge0}$, the Jacobi functions $Q_{d}^{(\alpha,\beta)}(x)$ can be expressed in elementary terms and have the following characterization (compare with~\cite[Eq.~(5.7)]{GZ}).
		\begin{proposition}\label{prop4}
			Let  $\alpha,\beta,d\in\mathbb{Z}_{\ge0}$ and let $P_d^{(\alpha, \beta)}$ denote the Jacobi polynomial defined in~\cite[\S 4.1]{Sz}. For $x\in\mathbb{R}\sm\{-1,1\}$, we have
			\begin{equation} \label{eq:Qn_polyform}
				Q_{d}^{(\alpha,\beta)}(x)
				= \frac{(-1)^{\alpha}}{2}P_d^{(\alpha,\beta)}(x)\log\Big|\frac{x+1}{x-1}\Big|
				+\frac{R(x)}{(x-1)^{\alpha}(x+1)^{\beta}},
			\end{equation}
			where  $R\in\Q[x]$ is a polynomial of degree $d+\alpha+\beta-1$.
			Moreover, let $F$ be any function of the form 
			\begin{equation} \label{eq:Qn_characterization}
				F(x) = P(x)\log\Big|\frac{x+1}{x-1}\Big|+\frac{R(x)}{(x-1)^{\alpha}(x+1)^{\beta}}
			\end{equation}
			with $P,R\in\mathbb{R}[x]$, and $P$ of degree $\le d$ such that $F(x) = O(x^{-d-\alpha-\beta-1})$, $x\to\infty$; then $F$ must be a multiple of $Q_d^{(\alpha,\beta)}(x)$.
		\end{proposition}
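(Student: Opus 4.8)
The plan is to treat the two assertions separately: I would derive the explicit form \eqref{eq:Qn_polyform} directly from the integral \eqref{eq:jacobi-def}, and then establish the characterization by a moment computation that reduces the decay hypothesis to the orthogonality relations defining $P_d^{(\alpha,\beta)}$.

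For the first part, I would expand the numerator $g(t):=(1-t)^{d+\alpha}(1+t)^{d+\beta}$ of the integrand in \eqref{eq:jacobi-def} as a finite Taylor series about $t=x$, writing $g(t)=\sum_{j=0}^{2d+\alpha+\beta}\tfrac{g^{(j)}(x)}{j!}(t-x)^j$, and integrate term by term. Everything then reduces to the elementary integrals $\int_{-1}^1(x-t)^{p}\,dt$: the single value $p=-1$ (coming from $j=d$) produces $\log\bigl|\tfrac{x+1}{x-1}\bigr|$, while every other $j$ contributes a rational function of $x$. The coefficient of the logarithm is $\tfrac{(-1)^d}{d!}g^{(d)}(x)$, which by the Rodrigues formula for $P_d^{(\alpha,\beta)}$ (see~\cite{Sz}) equals $2^d(1-x)^\alpha(1+x)^\beta P_d^{(\alpha,\beta)}(x)$; combined with the prefactor in \eqref{eq:jacobi-def} this collapses to exactly $\tfrac{(-1)^\alpha}{2}P_d^{(\alpha,\beta)}(x)$. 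For the remaining terms one checks, using that $g^{(j)}$ is divisible by $(1-x)^{d+\alpha-j}(1+x)^{d+\beta-j}$ for $j<d$, that each summand is in fact a polynomial; hence after multiplying by the prefactor the rational part takes the claimed shape $R(x)/\bigl((x-1)^\alpha(x+1)^\beta\bigr)$ with $R\in\Q[x]$. The degree $\deg R=d+\alpha+\beta-1$ is then forced by matching the large-$x$ expansions, using the decay $Q_d^{(\alpha,\beta)}(x)=O(x^{-d-\alpha-\beta-1})$, which is itself immediate from \eqref{eq:jacobi-def}.

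For the characterization, I would first note that the decomposition \eqref{eq:Qn_characterization} of a given $F$ into $P(x)\log\bigl|\tfrac{x+1}{x-1}\bigr|$ plus a rational function is unique, since $\log\bigl|\tfrac{x+1}{x-1}\bigr|$ is not rational (the singularity types at $x=1$ differ). It thus suffices to pin down $P$ and $R$ up to a common scalar. Writing $D(x):=(x-1)^\alpha(x+1)^\beta$ and expanding at infinity via $\log\bigl|\tfrac{x+1}{x-1}\bigr|=\int_{-1}^1\frac{dt}{x-t}=\sum_{m\ge1}x^{-m}\int_{-1}^1 t^{m-1}\,dt$, the coefficient of $x^{-m}$ in $D(x)P(x)\log\bigl|\tfrac{x+1}{x-1}\bigr|$ works out to $\int_{-1}^1 t^{m-1}D(t)P(t)\,dt$. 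Since $R$ is a polynomial, it can only cancel non-negative powers of $x$, so the requirement $F(x)=O(x^{-d-\alpha-\beta-1})$, i.e. $D(x)F(x)=O(x^{-d-1})$, is equivalent to the vanishing of these coefficients for $m=1,\dots,d$:
\[
\int_{-1}^1 t^{m-1}(t-1)^\alpha(t+1)^\beta P(t)\,dt=0,\qquad m=1,\dots,d.
\]
Equivalently, the degree-$\le d$ polynomial $P$ is orthogonal to every polynomial of degree $<d$ with respect to the Jacobi weight $(1-t)^\alpha(1+t)^\beta$ on $[-1,1]$ (the harmless factor $(-1)^\alpha$ from $(t-1)^\alpha$ drops out). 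By uniqueness of orthogonal polynomials for a positive weight, this forces $P$ to be a scalar multiple of $P_d^{(\alpha,\beta)}$; once $P$ is fixed, $R$ is determined by matching the non-negative powers of $x$. Hence the space of admissible $F$ is at most one-dimensional, and since the first part exhibits $Q_d^{(\alpha,\beta)}$ as a nonzero element of it, every such $F$ is a multiple of $Q_d^{(\alpha,\beta)}$.

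The bookkeeping in the first part (verifying that the $j<d$ terms contribute no poles and computing $\deg R$) needs care, but the genuine conceptual step is the identification in the second part of the $d$ decay conditions with the orthogonality relations characterizing the Jacobi polynomial. The main point to justify rigorously there is that expanding $\log\bigl|\tfrac{x+1}{x-1}\bigr|$ as the Cauchy transform $\int_{-1}^1\frac{dt}{x-t}$ is legitimate for large $x$ and yields precisely the moment integrals above; everything else is linear algebra together with the standard uniqueness of orthogonal polynomials.
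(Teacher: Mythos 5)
Your proof is correct, but it diverges from the paper's in both halves, most substantially in the characterization part. For the explicit form \eqref{eq:Qn_polyform}, the paper does not work from the definition \eqref{eq:jacobi-def} at all: it quotes Szeg\H{o}'s representation $Q_{d}^{(\alpha,\beta)}(x)=\tfrac{1}{2}(x-1)^{-\alpha}(x+1)^{-\beta}\int_{-1}^{1}(1-t)^{\alpha}(1+t)^{\beta}P_d^{(\alpha,\beta)}(t)(x-t)^{-1}dt$, where the Jacobi polynomial is already present, and then splits $\int_{-1}^{1}\frac{p(t)\,dt}{x-t}=p(x)\int_{-1}^{1}\frac{dt}{x-t}-\int_{-1}^{1}\frac{p(x)-p(t)}{x-t}\,dt$; your Taylor expansion of $(1-t)^{d+\alpha}(1+t)^{d+\beta}$ about $t=x$ combined with Rodrigues' formula is essentially a self-contained re-derivation of that representation --- more bookkeeping, but no external formula beyond Rodrigues. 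For the characterization, the routes are genuinely different: the paper expands $P=\sum_{j=0}^{d}c_jP_j^{(\alpha,\beta)}$ in the Jacobi basis, forms $G:=F-2(-1)^{\alpha}\sum_j c_jQ_j^{(\alpha,\beta)}$, observes that $G$ is a pure rational function $\widetilde{R}(x)/\bigl((x-1)^{\alpha}(x+1)^{\beta}\bigr)$ decaying like $O(x^{-\alpha-\beta-1})$ and hence zero, and then uses the asymptotics $Q_j^{(\alpha,\beta)}(x)\sim C_jx^{-j-\alpha-\beta-1}$, $C_j\ne0$, to force $c_j=0$ for $j<d$; whereas you convert the decay hypothesis into the $d$ moment conditions $\int_{-1}^{1}t^{m-1}(t-1)^{\alpha}(t+1)^{\beta}P(t)\,dt=0$ via the Cauchy-transform expansion of the logarithm and invoke uniqueness of orthogonal polynomials for the Jacobi weight. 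The paper's argument stays entirely inside the family $\{Q_j^{(\alpha,\beta)}\}$ and needs nothing beyond part one and the asymptotics from \eqref{eq:jacobi-def}; yours makes visible the conceptual reason the characterization holds (decay at infinity \emph{is} orthogonality against lower-degree polynomials) and would transfer to other weights, at the cost of justifying the coefficient-convolution step and importing the standard orthogonality facts --- both of which you correctly flag and which do go through for $|x|>1$. One small point common to both proofs that you should state explicitly: for $x\in(-1,1)$ the identities (in your case, term-by-term integrals against higher-order poles $(x-t)^{j-d-1}$) are first established on $\C\sm[-1,1]$ and then transported to the cut by the averaged boundary-value convention $Q_{d}^{(\alpha,\beta)}(x)=\tfrac12\bigl(Q_{d}^{(\alpha,\beta)}(x+i0)+Q_{d}^{(\alpha,\beta)}(x-i0)\bigr)$, under which the logarithm becomes $\log\bigl|\tfrac{x+1}{x-1}\bigr|$ and the rational part is unchanged.
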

		\begin{proof}
			The first claim follows from the integral representation (see~\cite[Eq.~(4.61.4)]{Sz})
\begin{equation*}
    \begin{split}
        	Q_{d}^{(\alpha,\beta)}(x)  =&
			\frac{(x-1)^{-\alpha}(x+1)^{-\beta}}{2}  \int_{-1}^{1}\frac{(1-t)^{\alpha}(1+t)^{\beta}P_d^{(\alpha,\beta)}(t)dt}{(x-t)}\,,
    \end{split}
\end{equation*}
			and writing $\int_{-1}^{1}\frac{p(t)dt}{x-t} = p(x)\int_{-1}^{1}\frac{dt}{x-t}-
			\int_{-1}^{1}\frac{p(x)-p(t)}{x-t}dt$. From~\eqref{eq:jacobi-def} it immediately follows that 
			\begin{equation}\label{eq:asymptotic_expansion_for_Q}
				Q_d^{(\alpha,\beta)}(x)\sim Cx^{-d-\alpha-\beta-1}\,,\qquad x\to\infty
			\end{equation}
			for some $C\ne0$, so $Q_d^{(\alpha,\beta)}(x) = O(x^{-d-\alpha-\beta-1})$ as $x\to\infty$. Now assume that~$F$ is given by~\eqref{eq:Qn_characterization} and satisfies $F(x)=O(x^{-d-\alpha-\beta-1})$. Any polynomial can be written as a linear combination of Jacobi polynomials, thus
			\begin{equation}\label{eq:P_decomposition}
				P(x)=\sum_{j=0}^{d}c_jP_j^{(\alpha,\beta)}(x)    
			\end{equation}
			for some $c_j \in \mathbb{R}$. 
			Consider 
			\[G(x) := F(x)-2(-1)^{\alpha}\sum_{j=0}^{d}c_jQ_j^{(\alpha,\beta)}(x)\,,\]
			where $c_j$ are as in \eqref{eq:P_decomposition}. 
			Then on one hand, \eqref{eq:Qn_polyform}, \eqref{eq:Qn_characterization} and \eqref{eq:P_decomposition} imply that $G(x)$ is of the form $(x-1)^{-\alpha}(x+1)^{-\beta}\widetilde{R}(x)$ for some polynomial~$\widetilde{R}$, and on the other hand, the assumption that $F(x) = O(x^{-d-\alpha-\beta-1})$  and~\eqref{eq:asymptotic_expansion_for_Q} imply that $G(x)=O(x^{-\alpha-\beta-1})$. This can only happen if $\widetilde{R}$ vanishes, and hence~$G=0$. Since $Q_j^{(\alpha,\beta)}(x)\sim C_jx^{-j-\alpha-\beta-1}$ for some $C_j\ne0$ and $F(x)=O(x^{-d-\alpha-\beta-1})$, we also see that $c_j=0$ for $j=0,\dots,d-1$, and thus $F$ is a multiple of $Q_d^{(\alpha,\beta)}$, as claimed.
		\end{proof}
		In view of the above characterization, the polynomial $R$ from~\eqref{eq:Qn_polyform} can be computed from the following identity for formal Laurent series in~$X$
\begin{equation*}
\begin{split}
\frac{(1-X^{-1})^{\alpha}(1+X^{-1})^{\beta}}{2} P_d^{(\alpha,\beta)}(X^{-1})& \log\Big(\frac{1+X}{1-X}\Big)= R(X^{-1})+O(X)\,.        
\end{split}
\end{equation*}
We note that each Jacobi function can be represented in terms of the hypergeometric function~$_2F_{1}$. More precisely, from~\cite[Eq.~(4.61.5)]{Sz} and the symmetry 
		\[Q_d^{(\alpha,\beta)}(x)=(-1)^{\alpha+\beta+d+1}Q_{d}^{(\beta,\alpha)}(-x),\] we obtain
		\begin{equation}\label{eq:Q_via_hypergeometric_function}
  \begin{split}
      	Q_d^{(\alpha,\beta)}(x) &= 
			\frac{2^{d+\alpha+\beta}(d+\alpha)!(d+\beta)!}{(2d+\alpha+\beta+1)!(x-1)^{\alpha}(x+1)^{d+\beta+1}} 
			{}_2F_{1}\Big(d+\beta+1,d+1;2d+\alpha+\beta+2;\frac{2}{1+x}\Big)\, ,
  \end{split}
		\end{equation}
		where in order to incorporate the extension of $Q_d^{(\alpha,\beta)}(x)$ to $x \in (-1,1)$, we set for $t>1$
		\begin{equation}\label{eq:extension_of_hypergeometric_functions}
			{}_2F_1(a,b;c;t) := \frac{1}{2}\big({}_2F_1(a,b;c;t+i0)+{}_2F_1(a,b;c;t-i0)\big)\,.
		\end{equation}
		We will use this definition of $Q_d^{(\alpha,\beta)}(x)$ in terms of hypergeometric functions to compute Mellin transforms of Whittaker functions in the following section. Thus we additionally need some results on the Mellin transform of the hypergeometric function ${}_2F_1$. 
		Specifically, by \cite[p. 314, Eq. 2.21.1.2]{Prud},
\begin{equation}\label{eq:prudnikov1}
\begin{split}
	& \int_{0}^{\infty}  \frac{\Gamma(a)\Gamma(b)\Gamma(c-a)\Gamma(c-b)}{\Gamma(c)}{}_2F_{1}(a,b;c;1-x)x^{s-1}dx\\ & \qquad \qquad = \Gamma(s)\Gamma(s-(a+b-c))\Gamma(a-s)\Gamma(b-s)\,,
\end{split}
		\end{equation}
		from which one also gets
\begin{equation}\label{eq:prudnikov2}
\begin{split}
\int_{0}^{\infty}&\frac{\Gamma(a)\Gamma(b)\Gamma(c-a)\Gamma(c-b)}{\Gamma(c)}{}_2F_{1}(a,b;c;1+x)x^{s-1}dx\\ &\qquad \qquad  = \cos(\pi s)\Gamma(s)\Gamma(s-(a+b-c))\Gamma(a-s)\Gamma(b-s)\,,
\end{split}
\end{equation}
		where we define ${}_2F_1(a,b;c;t)$ for $t>1$ as in \eqref{eq:extension_of_hypergeometric_functions}.
		%These two formulas give a version of Lemma 5 that holds whenever $k_1+m_1$ and $k_2+m_2\in\mathbb{Z}$,
		%in particular Lemma 5 should hold almost the same way for $m_1,m_2\in\frac12\mathbb{Z}$ with suitable $k_i$.

		\section{Proof of the main result}
		\label{sec:proof_main}
		Let $W_{\kappa,\mu}$ denote Whittaker's $W$-functions as in \cite[Eq. 13.14.1]{DLMF} or \cite[p.~386]{E2}.
		It will be convenient to extend $W_{\kappa,\mu}$ to a function defined on $\mathbb{R}\sm \{0\}$ as follows
		\begin{equation} \label{eq:Wext}
			W_{\kappa,\mu}(y) := \frac{\Gamma(1/2+\mu-\sgn(y)\kappa)}{\Gamma(1/2+\mu-\kappa)}
			W_{\sgn(y)\kappa,\mu}(|y|)\,, \quad y\ne 0\,.
		\end{equation}
		%{\color{blue}[D: Should we maybe denote the extension with the same letter, i.e., $W_{\kappa,\mu}$? For positive $y$ we have $W_{\kappa,\mu}(y)=\mathcal{W}_{\kappa,\mu}(y)$ so this would not lead to any confusion.]}
		Next, we will need two results about $W_{\kappa,\mu}$.
		\begin{lemma} \label{lem:Wmellin}
			For $\kappa,\mu\ge0$ and $\mathrm{Re}(s)>\mu-1/2$ we have
			\begin{equation} \label{eq:Wmellin1}
				\int_{0}^{\infty}W_{\kappa,\mu}(t)e^{-t/2}t^{s-1}dt = \frac{\Gamma(s-\mu+1/2)\Gamma(s+\mu+1/2)}{\Gamma(s-\kappa+1)}\,\\
			\end{equation}
			% and
			% \begin{align} \label{eq:Wmellin3}
				%		\int_{0}^{\infty}W_{\kappa,\mu}&(t)\log t\, e^{-t/2} t^{s-1}dt= \frac{\Gamma \left(s-\mu +\frac{1}{2}\right) \Gamma \left(s+\mu
					%   +\tfrac{1}{2}\right)}{\Gamma (s-\kappa +1)} \\ & \times\left(-\psi(s-\kappa +1)+\psi %\left(s-\mu
				%   +\tfrac{1}{2}\right)+\psi \left(s+\mu
				%   +\tfrac{1}{2}\right)\right)\nonumber\,,
				%	\end{align}
			%	where $\psi(z)$ is the polygamma function 
			and for $\kappa\ge\re(s)>\mu-1/2$ we have
\begin{equation} \label{eq:Wmellin2}
\begin{split}
\int_{0}^{\infty}&\Ww_{\kappa,\mu}(-t)e^{t/2}t^{s-1}dt  =\frac{\cos(\pi(\kappa-\mu))}{\pi}\Gamma(s-\mu+1/2)\Gamma(s+\mu+1/2)\Gamma(\kappa-s)\,.    
\end{split}
\end{equation}
		\end{lemma}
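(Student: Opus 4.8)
The plan is to reduce both identities to Mellin transforms of the Tricomi confluent hypergeometric function $U(a,b,t)$, via the standard relation $W_{\kappa,\mu}(y)=e^{-y/2}y^{\mu+1/2}U(\mu-\kappa+\tfrac12,2\mu+1,y)$. The two transforms I will need are
\[
\int_0^\infty e^{-t}t^{S-1}U(a,b,t)\,dt=\frac{\Gamma(S)\Gamma(S-b+1)}{\Gamma(S+a-b+1)},\qquad
\int_0^\infty t^{S-1}U(a,b,t)\,dt=\frac{\Gamma(S)\Gamma(S-b+1)\Gamma(a-S)}{\Gamma(a)\Gamma(a-b+1)},
\]
both of which I would derive from the Euler-type integral representation $U(a,b,t)=\frac{1}{\Gamma(a)}\int_0^\infty e^{-tu}u^{a-1}(1+u)^{b-a-1}\,du$ (valid for $\re(a)>0$) by interchanging the order of integration and evaluating the resulting Beta integral.

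For \eqref{eq:Wmellin1}, substituting the $U$-representation gives $W_{\kappa,\mu}(t)e^{-t/2}t^{s-1}=e^{-t}t^{S-1}U(a,b,t)$ with $a=\mu-\kappa+\tfrac12$, $b=2\mu+1$, $S=s+\mu+\tfrac12$; the first transform above then produces exactly $\Gamma(s-\mu+\tfrac12)\Gamma(s+\mu+\tfrac12)/\Gamma(s-\kappa+1)$ after simplifying the arguments.

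For \eqref{eq:Wmellin2}, I would first use the extension \eqref{eq:Wext} to write $W_{\kappa,\mu}(-t)=\frac{\Gamma(\tfrac12+\mu+\kappa)}{\Gamma(\tfrac12+\mu-\kappa)}W_{-\kappa,\mu}(t)$; the factor $e^{t/2}$ cancels the Gaussian in $W_{-\kappa,\mu}(t)=e^{-t/2}t^{\mu+1/2}U(\mu+\kappa+\tfrac12,2\mu+1,t)$, so that the integral becomes a multiple of $\int_0^\infty t^{S-1}U(\mu+\kappa+\tfrac12,2\mu+1,t)\,dt$. The second transform contributes the extra factor $\Gamma(\kappa-s)$ and a denominator $\Gamma(\mu+\kappa+\tfrac12)\Gamma(\kappa-\mu+\tfrac12)$; the first of these cancels against the numerator of the prefactor from \eqref{eq:Wext}, and I would finish by applying the reflection formula $\Gamma(z)\Gamma(1-z)=\pi/\sin(\pi z)$ with $z=\tfrac12+\mu-\kappa$, which turns the remaining product $\Gamma(\tfrac12+\mu-\kappa)\Gamma(\tfrac12-\mu+\kappa)$ into $\pi/\cos(\pi(\kappa-\mu))$, yielding exactly the claimed right-hand side.

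The main obstacle is bookkeeping around convergence and analyticity rather than any genuine difficulty. For \eqref{eq:Wmellin2} the range $\kappa\ge\re(s)>\mu-\tfrac12$ falls out cleanly from the endpoint behaviour $W_{-\kappa,\mu}(t)e^{t/2}\sim t^{1/2-\mu}$ as $t\to0^+$ and $\sim t^{-\kappa}$ as $t\to\infty$, and the Euler representation applies directly since $a=\mu+\kappa+\tfrac12>0$. For \eqref{eq:Wmellin1}, however, the Euler representation requires $\re(a)=\re(\mu-\kappa+\tfrac12)>0$, which can fail for large $\kappa$; since the right-hand side is entire in $\kappa$ and the integral converges and is holomorphic in $\kappa$ for all $\kappa\ge0$ (the integrand decays like $e^{-t}t^{\kappa+s-1}$ at infinity and behaves like $t^{s-\mu-1/2}$ at $0$, independently of the sign of $a$), I would extend the identity to all admissible $\kappa$ by analytic continuation. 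I would also confirm that the degenerate case $\tfrac12+\mu-\kappa\in\mathbb{Z}_{\le0}$ in \eqref{eq:Wmellin2} is consistent, since there the prefactor from \eqref{eq:Wext} vanishes and $\cos(\pi(\kappa-\mu))=0$ simultaneously.
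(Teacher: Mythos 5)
Your proposal is correct, but it takes a genuinely different route from the paper: the paper proves both formulas essentially by citation, invoking Erd\'elyi's \emph{Tables of Integral Transforms} \S 6.9, Eq.~(8) for \eqref{eq:Wmellin1} and Eq.~(7) (whose stated form contains a typo, corrected against DLMF Eq.~(13.23.5)) together with the reflection formula for \eqref{eq:Wmellin2}. You instead rederive these table entries from scratch: writing $W_{\kappa,\mu}(y)=e^{-y/2}y^{\mu+1/2}U(\mu-\kappa+\tfrac12,2\mu+1,y)$, using the Euler representation of the Tricomi function, interchanging integrals (Tonelli/Fubini, with absolute convergence exactly under $\re(s)>\mu-1/2$), and evaluating a Beta integral; your parameter bookkeeping checks out in both cases, including the cancellation of $\Gamma(\mu+\kappa+\tfrac12)$ against the prefactor coming from \eqref{eq:Wext} and the reflection step producing $\cos(\pi(\kappa-\mu))/\pi$. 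The one piece of genuine work your route requires beyond table-lookup is the analytic continuation in $\kappa$ for \eqref{eq:Wmellin1} when $\kappa\ge\mu+\tfrac12$ (where the Euler representation fails), and you identify and handle it correctly: the left side is holomorphic in $\kappa$ by locally uniform convergence and the right side is entire in $\kappa$, so the identity propagates. What each approach buys: the paper's proof is two lines but rests on an external table that, ironically, needed correcting; yours is self-contained, makes transparent where each hypothesis enters, and would survive in a setting where the reader cannot or will not trust the tables. One small caveat, which concerns the lemma's statement rather than your argument: at the boundary $\re(s)=\kappa$ the integral in \eqref{eq:Wmellin2} is not absolutely convergent (the integrand has modulus $\asymp t^{-1}$ at infinity), so both your derivation and the cited table entry really establish the formula for $\kappa>\re(s)$, with the equality case at best degenerate; this does not affect the applications in the paper.
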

		\begin{proof}
			The first equation is a special case of~\cite[\S6.9 Eq.~(8)]{E2}. %and the second is found by taking the derivatinve in $s$ of the first. 
			The second equation
			is~\cite[\S6.9 Eq.~(7)]{E2} together with the functional equation for the gamma function. (Note that there is a typo in~\cite[\S6.9 Eq.~(7)]{E2}, compare with \cite[Eq.~(13.23.5)]{DLMF}.)
		\end{proof}

		\begin{lemma} \label{lem:Wconvolution} 
			Let $k_1,k_2,m_1,m_2\in\mathbb{Z}_{\ge0}$ satisfy $m_i\le k_i$ and $m_1+m_2<k_1+k_2$, and set \[\ell:=k_1+k_2-m_1-m_2-1 \quad 
			\text{and} 
			\quad 
			q:=k_1+k_2+m_1+m_2-1.\] Then for any
			$a,b\in\mathbb{R}$ such that $a+b=1$ and $ab\ne 0$
			one has
\begin{equation}
\begin{split}
&\int_{0}^{\infty}W_{k_1,m_1}(a y)W_{k_2,m_2}(b y) e^{-y/2}y^{k_1+k_2-2}dy \\
   & \qquad \qquad  =\frac{2(-1)^{k_1-m_1}q!\ell!}{\pi}|a|^{m_1+1/2}|b|^{m_2+1/2}Q_{\ell}^{(2m_1,2m_2)}(b-a)\,.
\end{split}       
\end{equation}
	
		\end{lemma}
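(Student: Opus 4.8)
\section*{Proof proposal}

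The plan is to recognize the left-hand side as a Mellin convolution, evaluate it by a Barnes-integral computation, and then use the hypergeometric representation \eqref{eq:Q_via_hypergeometric_function} of $Q_\ell^{(2m_1,2m_2)}$ to identify the result. Write $N:=k_1+k_2$ and, using $a+b=1$, factor the Gaussian weight as $e^{-y/2}=e^{-ay/2}e^{-by/2}$. When $a,b>0$, setting $\phi(t):=W_{k_1,m_1}(t)e^{-t/2}$ and $\psi(t):=W_{k_2,m_2}(t)e^{-t/2}$ turns the integrand into $\phi(ay)\psi(by)y^{N-2}$, whose Mellin transforms are supplied by \eqref{eq:Wmellin1}. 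When one of $a,b$ is negative, say $a<0$ (so $b=1-a>1$), I would instead absorb the growing exponential through the extension \eqref{eq:Wext}: the identity $-|a|+b-1=0$ makes all exponentials cancel and turns the integrand into $\phi_-(|a|y)\psi(by)y^{N-2}$, where $\phi_-(t):=W_{k_1,m_1}(-t)e^{t/2}$ has Mellin transform \eqref{eq:Wmellin2}; the case $b<0$ is symmetric (or follows from $Q_d^{(\alpha,\beta)}(x)=(-1)^{\alpha+\beta+d+1}Q_d^{(\beta,\alpha)}(-x)$).

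In either case the Mellin convolution theorem gives
\[
\int_{0}^{\infty}W_{k_1,m_1}(a y)W_{k_2,m_2}(b y) e^{-y/2}y^{N-2}dy
=\frac{1}{2\pi i}\int_{(c)}\widetilde\phi(z)\,\widetilde\psi(N-1-z)\,|a|^{-z}|b|^{-(N-1-z)}\,dz,
\]
and the crucial algebraic simplification is that the ``Whittaker denominators'' $\Gamma(z-k_1+1)$ and $\Gamma(N-1-z-k_2+1)=\Gamma(k_1-z)$ either cancel outright (the negative-argument case, since \eqref{eq:Wmellin2} contributes a factor $\Gamma(k_1-z)$ in the numerator) or combine through the reflection formula $\Gamma(z-k_1+1)\Gamma(k_1-z)=\pi/\sin(\pi(k_1-z))$ (the case $a,b>0$). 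After the substitution $s=z-m_1+\tfrac12$ the integrand becomes exactly $\Gamma(s)\Gamma(s+2m_1)\Gamma(\ell+1-s)\Gamma(N+m_2-m_1-s)$, possibly times $\cos(\pi s)$, which is precisely the right-hand side of the Prudnikov Mellin transforms \eqref{eq:prudnikov1} and \eqref{eq:prudnikov2} with $a'=\ell+1$, $b'=\ell+2m_2+1$, and $c'=2N=2\ell+2m_1+2m_2+2$. Inverting, and using $a+b=1$ to reduce the resulting hypergeometric argument to $1/b=\tfrac{2}{1+x}$ with $x=b-a$, produces exactly the function ${}_2F_{1}(\ell+2m_2+1,\ell+1;2N;\tfrac{2}{1+x})$ appearing in \eqref{eq:Q_via_hypergeometric_function}.

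It then remains to convert this hypergeometric function into $Q_\ell^{(2m_1,2m_2)}(x)$ via \eqref{eq:Q_via_hypergeometric_function} and to collect the accumulated constants: the powers of $|a|$ and $|b|$ coming from the $|a|^{-z}|b|^{-(N-1-z)}$ factor, the Gamma quotient $\Gamma(a')\Gamma(b')\Gamma(c'-a')\Gamma(c'-b')/\Gamma(c')$, the factor $2^{\ell+2m_1+2m_2}$ from \eqref{eq:Q_via_hypergeometric_function}, and the sign, into the claimed prefactor $\tfrac{2(-1)^{k_1-m_1}q!\,\ell!}{\pi}|a|^{m_1+1/2}|b|^{m_2+1/2}$. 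Here the two cases conveniently agree in sign: for $a,b>0$ the reflection step yields $(-1)^{k_1+1}$ and rewriting $\sin(\pi z)=(-1)^{m_1+1}\cos(\pi s)$ yields a further $(-1)^{m_1+1}$, whose product is $(-1)^{k_1-m_1}$, while in the negative-argument case \eqref{eq:Wmellin2} supplies $\cos(\pi(k_1-m_1))=(-1)^{k_1-m_1}$ directly.

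The main obstacle, and the only place requiring genuine care rather than bookkeeping, is the analytic justification of the Barnes-integral manipulation. I must place the contour $\re(z)=c$ inside the common strip of validity of the two Mellin transforms, namely $m_1-\tfrac12<\re(z)<\ell+m_1+\tfrac12$ (the upper bound being the condition $\re(N-1-z)>m_2-\tfrac12$), together with $\re(z)<k_1$ in the negative-argument case. The hypothesis $m_1+m_2<k_1+k_2$ is exactly what makes this strip nonempty (it forces $\ell\ge0$), while $m_i\le k_i$ keeps $k_1\ge m_1$ so that the relevant Gamma factors are well defined. I would also need Fubini to interchange the $y$- and $z$-integrations and a Stirling estimate on the four Gamma factors to guarantee absolute convergence of the Barnes integral along vertical lines, along with the decay $O(x^{-d-\alpha-\beta-1})$ from Proposition~\ref{prop4} and \eqref{eq:asymptotic_expansion_for_Q} to control the tails. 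Everything after that is the careful but routine constant-chasing described above.
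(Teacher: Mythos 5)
Your proposal is correct and is essentially the paper's own proof in dual packaging: the paper fixes the ratio variable $x=a/b$, computes the Mellin transform of the integral in $x$ via Lemma~\ref{lem:Wmellin} and Fubini, and matches it against the Mellin transform \eqref{eq:prudnikov1}--\eqref{eq:prudnikov2} of the hypergeometric representation \eqref{eq:Q_via_hypergeometric_function} of $Q_\ell^{(2m_1,2m_2)}$, which is precisely your Mellin--Parseval/Barnes-integral computation read in the forward direction. Your case split ($a,b>0$ handled by the reflection formula producing $(-1)^{k_1-m_1}\cos(\pi s)$ together with \eqref{eq:prudnikov2}; one negative argument handled by \eqref{eq:Wmellin2}, whose $\Gamma(\kappa-s)$ cancels the other Whittaker denominator, together with \eqref{eq:prudnikov1}), the sign bookkeeping, and the convergence strip all coincide with the paper's argument.
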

		
		\begin{proof}
			We start by considering the case $a\in(0,1)$ and define  $I_1(x)$  for $x>0$ by 
\begin{equation*}
    \begin{split}
x^{-m_1-1/2} \int_{0}^{\infty}
			W_{k_1,m_1}(xy)W_{k_2,m_2}(y)e^{-(1+x)y/2}y^{k_1+k_2-2}dy\,.        
    \end{split}
\end{equation*}
			In order to prove the lemma in this case, it suffices to show that 
			\begin{equation}\label{eq:I1_1}
				I_1(x)=2(-1)^{k_1-m_1}\ell!q!
				\frac{Q_{\ell}^{(2m_1,2m_2)}(\tfrac{1-x}{1+x})}{\pi(1+x)^{q+1}}\,.
			\end{equation}
			
			Using~\eqref{eq:Wmellin1} and 
			%the formula for the Mellin transform of a multiplicative convolution (see e.g.~
			\cite[\S6.1 Eq.~(13)]{E2}) we obtain that for $2m_1<\mathrm{Re}(s)<k_1+k_2+m_1-m_2$, the Mellin transform of $I_1(x)$ is equal to
			\begin{align*}
				\mathcal{M}(I_1,s) 
				 = & \frac{\Gamma(s-2m_1)\Gamma(s)\Gamma(\ell+2m_1-s+1)
					\Gamma(q-s+1)}{\Gamma(s-m_1-k_1+1/2)\Gamma(k_1+m_1-s+1/2)} \\
				 = &\frac{1}{\pi}\cos\left(\pi (s-k_1-m_1)\right)\Gamma(s-2m_1)\Gamma(s)  \Gamma(\ell+2m_1-s+1) \Gamma(q-s+1).
			\end{align*}
			With the help of \eqref{eq:Q_via_hypergeometric_function}, we write 
\begin{equation}\label{eq:C1}
\begin{split}
C_1\cdot \frac{Q^{(2m_1, 2m_2)}_{\ell} (\tfrac{1-x}{1+x})}{(x+1)^{q+1}} 
& =\,C_1\cdot\frac{  \Gamma \left(q-2m_2+1\right) \Gamma \left(q-2m_1+1\right) }{2 \Gamma(2k_1+2k_2)}\\
& \qquad \qquad \times   {}_2F_1\left(q-2m_2+1,q+1;2k_1+2k_2;1+x\right).
\end{split}
\end{equation}
			By \eqref{eq:prudnikov2}, the Mellin transform of \eqref{eq:C1} is equal to
			\begin{align*}
	C_1\cdot&	\frac{\cos (\pi  s) \Gamma (s) \Gamma \left(s-2 m_1\right)}{2 \Gamma \left(\ell+1\right) \Gamma \left(q+1\right)}  \Gamma \left(-s+q-2m_2+1\right) \Gamma \left(-s+q+1\right).
			\end{align*}
			In order for the expression above to coincide with $\mathcal{M}(I_1,s) $, we must require that 
			\begin{align*}
				C_1	 = \frac{2  \Gamma \left(\ell+1\right) \Gamma \left(q+1\right) \cos \left(\pi\left(-k_1-m_1+s\right)\right)}{\pi \cos (\pi  s)}.
			\end{align*}
			Thus, we obtain for $k_1+m_1 \in \mathbb{Z}$ that $I_1(x)$ is equal to 
\begin{align}\label{eq:Wconvolution_goal1}
\frac{2}{\pi }   \Gamma \left(\ell+1\right) \Gamma \left(q+1\right) \cos \left(\pi\left(k_1+m_1\right)\right) \frac{Q^{(2m_1, 2m_2)}_{\ell} (\tfrac{1-x}{1+x})}{(x+1)^{q+1}} .
			\end{align}
			%Or, rewriting the expression above with the help of $\ell$ and $q$, for $k_1+m_1 \in \mathbb{Z}$ we obtain \begin{align}\label{eq:Wconvolution_goal1}I_1(x) =   \frac{2  }{\pi }  \Gamma \left(\ell+1\right) \Gamma \left(q+1\right) \cos \left(\pi  \left(k_1+m_1\right)\right) \frac{Q^{(2m_1, 2m_2)}_{\ell} (\tfrac{1-x}{1+x})}{(x+1)^{q+1}}.\end{align} 
			If $\ell, q \in \mathbb{N}_0$ and $k_1+m_1 \equiv k_1-m_1\bmod 2$, then we exactly recover  \eqref{eq:I1_1}.
			%\begin{align*}I_1(x) =   \frac{2  \Gamma \left(\ell+1\right) \Gamma \left(q+1\right) \cos \left(\pi  \left(-k_1-m_1+s\right)\right)}{\pi \cos (\pi  s)} \frac{Q^{(2m_1, 2m_2)}_{\ell} (\tfrac{1-x}{1+x})}{(x+1)^{q+1}} .\end{align*}

It remains to prove the result for $a<0$ (the case $b<0$ follows by symmetry). For $x>0$, we define $I_2(x)$ as 
\begin{equation*}
\begin{split}
\int_{0}^{\infty}x^{-m_1-1/2}
\Ww_{k_1,m_1}(-xy)\Ww_{k_2,m_2}(y)e^{-(1-x)y/2}y^{k_1+k_2-2}dy,      
\end{split}
\end{equation*}
and again using Lemma~\ref{lem:Wmellin} and  \cite[\S6.1 Eq.~(13)]{E2} we calculate $\mathcal{M}(I_2,s)$ is equal to 
			\begin{align*}
\frac{ \cos( \pi (k_1-m_1))}{\pi}&\Gamma(s-2m_1)\Gamma(s)\Gamma(k_1+m_1-s+1/2)  \\
& \times  \frac{\Gamma(\ell+2m_1-s+1)\Gamma(q-s+1)}{\Gamma(k_1+m_1-s+1/2)} 
\end{align*}
with the same fundamental strip as before. With the help of \eqref{eq:Q_via_hypergeometric_function}, we write 
\begin{equation}\label{eq:C2}
\begin{split}
C_2\cdot\frac{Q^{(2m_1, 2m_2)}_{\ell} (\tfrac{1+x}{1-x})}{(1-x)^{q+1}} 
&=C_2\cdot\frac{ \Gamma \left(q-2m_2+1\right) \Gamma \left(q-2m_1+1\right)}{2 \Gamma( 2 k_1 + 2 k_2  )} \\
& \qquad \qquad \times \, _2 F_1\left(q-2m_2+1,q+1;2 \left(k_1+k_2\right);1-x\right).
				\end{split}
			\end{equation}
			By \eqref{eq:prudnikov1}, the Mellin transform of \eqref{eq:C2} is equal to
			\begin{align*}
				C_2\cdot\frac{	 \Gamma (s) \Gamma \left(s-2 m_1\right) \Gamma \left(-s+q-2m_2+1\right) \Gamma \left(-s+q+1\right)}{2 \Gamma \left(\ell+1\right) \Gamma \left(q+1\right)}.
			\end{align*}
			In order for the expression above to coincide with $\mathcal{M}(I_2,s) $, we must require that 
			\begin{align*}
				C_2 =\frac{2 }{\pi } \cos \left(\pi  \left(k_1-m_1\right)\right) \Gamma \left(\ell+1\right) \Gamma \left(q+1\right).
			\end{align*}
Thus, $I_2(x) $ is equal to 
\begin{align*}
\frac{2 }{\pi } \cos \left(\pi  \left(k_1-m_1\right)\right) \Gamma \left(\ell+1\right) \Gamma \left(q+1\right) \frac{Q^{(2m_1, 2m_2)}_{\ell} (\tfrac{1+x}{1-x})}{(1-x)^{q+1}}.
			\end{align*}
			If we assume $k_1-m_1 \in \mathbb{Z}$ and $\ell, q \in \mathbb{N}_0$, then we obtain 
			\begin{equation}\label{eq:I2_1}
				I_2(x)=2(-1)^{k_1-m_1}\ell!q!
				\frac{Q_{\ell}^{(2m_1,2m_2)}(\tfrac{1+x}{1-x})}{\pi(1-x)^{q+1}}\,,
			\end{equation}
			which is easily seen to imply the claim of the lemma for $a<0$.
		\end{proof}

		Finally, we recall the holomorphic projection lemma from~\cite{GZ} (we restrict
		to the case of $\SL_2(\mathbb{Z})$).
		
		\begin{lemma}[Holomorphic Projection Lemma]
			Let $\widetilde{\Phi}$ be a non-holomorphic modular form of weight $k>2$ for $\SL_2(\mathbb{Z})$ with a
			Fourier expansion $\widetilde{\Phi}(z) = \sum_{m\in\mathbb{Z}}a_m(y)e^{2\pi i mx}$,
			and suppose that for some $\varepsilon>0$ we have $\widetilde{\Phi}(z)=O(y^{-\varepsilon})$ as $z\to i\infty$. Define
			\[a_m = \frac{(4\pi m)^{k-1}}{(k-2)!}\int_{0}^{\infty}a_m(y)e^{-2\pi my}y^{k-2}dy\,,\qquad m>0\,.\]
			Then the function $\Phi(z)=\sum_{m>0}a_me^{2\pi i m z}$ is a holomorphic cusp form of weight $k$ for $\SL_2(\mathbb{Z})$ and moreover $\langle f,\Phi\rangle=\langle f,\widetilde{\Phi}\rangle$ for all $f\in S_k(\SL_2(\mathbb{Z}))$. \qed
		\end{lemma}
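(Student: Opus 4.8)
The plan is to realize $\Phi$ as the Petersson projection of $\widetilde\Phi$ onto the finite-dimensional space $S_k := S_k(\SL_2(\Z))$ of holomorphic cusp forms of weight $k$, and then to read off its Fourier coefficients using Poincar\'e series. The main tool is the family of holomorphic Poincar\'e series
\[
P_m(z) = \sum_{\gamma\in\Gamma_\infty\backslash\Gamma} j(\gamma,z)^{-k}\,e^{2\pi i m\gamma z},\qquad m\ge 1,
\]
where $\Gamma_\infty$ is the stabilizer of the cusp $\infty$ and $j(\gamma,z)=cz+d$. For $k>2$ this series converges absolutely, defines an element of $S_k$, and enjoys the reproducing property: for any $g=\sum_{n\ge1}c_ne^{2\pi i nz}\in S_k$, unfolding $\langle g,P_m\rangle$ over $\Gamma_\infty\backslash\HH$ and integrating in $x$ gives
\[
\langle g,P_m\rangle = \frac{(k-2)!}{(4\pi m)^{k-1}}\,c_m.
\]
These are classical facts (Petersson) that I would take as known.

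The crux is to run the same unfolding with $\widetilde\Phi$ in place of $g$. The unfolding uses only that $\widetilde\Phi$ transforms with weight $k$, not that it is holomorphic, so $\widetilde\Phi(z)\,\overline{P_m(z)}\,y^{k}$ is $\Gamma$-invariant and the computation proceeds verbatim:
\[
\langle \widetilde\Phi, P_m\rangle
= \int_{\Gamma_\infty\backslash\HH} \widetilde\Phi(z)\,e^{-2\pi i m\overline z}\,y^{k-2}\,dx\,dy
= \int_0^\infty a_m(y)\,e^{-2\pi m y}\,y^{k-2}\,dy,
\]
where the last equality comes from inserting $\widetilde\Phi(z)=\sum_n a_n(y)e^{2\pi i nx}$ and integrating $x$ over a period, which selects the term $n=m$. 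Comparing with the definition of $a_m$ in the statement, this reads $\langle\widetilde\Phi,P_m\rangle = \tfrac{(k-2)!}{(4\pi m)^{k-1}}\,a_m$.

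To conclude, I would introduce $\Phi_0\in S_k$ as the unique cusp form with $\langle f,\Phi_0\rangle=\langle f,\widetilde\Phi\rangle$ for all $f\in S_k$; it exists and is unique by non-degeneracy of the Petersson product on the finite-dimensional space $S_k$ (Riesz representation applied to the functional $f\mapsto\langle f,\widetilde\Phi\rangle$, which converges because $f$ decays exponentially at the cusp while $\widetilde\Phi=O(y^{-\eps})$). This $\Phi_0$ is a holomorphic cusp form satisfying the asserted inner-product identity by construction, so it only remains to identify it with $\Phi$. Writing $\Phi_0=\sum_{m\ge1}b_me^{2\pi i mz}$, the reproducing property gives $\langle\Phi_0,P_m\rangle=\tfrac{(k-2)!}{(4\pi m)^{k-1}}b_m$, while the defining relation (taking $f=P_m$ and using that the Petersson product is Hermitian) gives $\langle\Phi_0,P_m\rangle=\langle\widetilde\Phi,P_m\rangle$. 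Combining these with the previous paragraph yields $b_m=a_m$ for every $m$, so $\Phi_0=\Phi$. This simultaneously proves that $\Phi$ is a holomorphic cusp form of weight $k$ and that $\langle f,\Phi\rangle=\langle f,\widetilde\Phi\rangle$ for all $f\in S_k$.

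The step I expect to be the main obstacle is justifying the unfolding for $\widetilde\Phi$, i.e.\ the absolute convergence needed to interchange the sum over $\Gamma_\infty\backslash\Gamma$ with the integral:
\[
\int_{\Gamma_\infty\backslash\HH} |\widetilde\Phi(z)|\,e^{-2\pi m y}\,y^{k-2}\,dx\,dy < \infty.
\]
The exponential factor $e^{-2\pi m y}$ controls the cuspidal region $y\to\infty$ (together with $\widetilde\Phi=O(y^{-\eps})$ there), but the region $y\to0$ in the strip must be handled through automorphy: moving $z$ into the standard fundamental domain by some $\gamma\in\Gamma$ and using $|\widetilde\Phi(z)|=(\operatorname{Im}(\gamma z)/y)^{k/2}\,|\widetilde\Phi(\gamma z)|$ together with the cusp bound on $\widetilde\Phi(\gamma z)$ supplies the polynomial control in $y$ required to apply dominated convergence. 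It is precisely here that the hypothesis $\widetilde\Phi=O(y^{-\eps})$ is indispensable; without some decay at the cusp neither the Mellin-type integral defining $a_m$ nor the unfolding would be legitimate.
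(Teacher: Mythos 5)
Your route is the classical one: the paper itself gives no proof of this lemma (it is quoted with \verb|\qed| from Gross--Zagier \cite{GZ}), and the Poincar\'e-series argument you outline is precisely the proof behind that citation. The skeleton is sound: the reproducing property with constant $(k-2)!/(4\pi m)^{k-1}$, the unfolding of $\langle\widetilde\Phi,P_m\rangle$ selecting $\int_0^\infty a_m(y)e^{-2\pi my}y^{k-2}dy$, the Riesz representative $\Phi_0$ on the finite-dimensional $S_k$, and the coefficient comparison $b_m=a_m$ (which, as you note, needs no spanning statement for the $P_m$) all check out, including the Hermitian step $\langle\Phi_0,P_m\rangle=\langle\widetilde\Phi,P_m\rangle$.

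However, the step you yourself flag as the main obstacle is defective as you sketch it. Moving $z$ to the fundamental domain $\mathcal{F}$ gives only the pointwise bound $|\widetilde\Phi(z)|\le C\,(y^*)^{k/2-\eps}y^{-k/2}$ with $y^*=\operatorname{Im}(\gamma z)$, and near rationals with small denominator one has $y^*\asymp 1/y$; so pointwise this yields only $|\widetilde\Phi(z)|\ll y^{-k+\eps}$ as $y\to0$, and the resulting majorant $y^{\eps-2}$ for the unfolded integrand is \emph{not} integrable near $y=0$ --- dominated convergence with this bound fails. The repair is standard but uses an input absent from your sketch: the large values of $y^*$ occur on a small set of $x$, which one quantifies by summing over cosets rather than taking a pointwise supremum. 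Concretely, fold the absolute integral back by Tonelli,
\[
\int_{\Gamma_\infty\backslash\HH}|\widetilde\Phi(z)|e^{-2\pi my}y^{k-2}\,dx\,dy
=\int_{\mathcal{F}}|\widetilde\Phi(z)|\,y^{k/2}\sum_{\gamma\in\Gamma_\infty\backslash\Gamma}(\operatorname{Im}\gamma z)^{k/2}e^{-2\pi m\operatorname{Im}\gamma z}\,\frac{dx\,dy}{y^2}\,,
\]
and use the absolute convergence of $E(z,k/2)$ for $k>2$ together with $E(z,k/2)-y^{k/2}=O(y^{1-k/2})$ on $\mathcal{F}$: this bounds the integrand by $C\big(y^{k-\eps}e^{-2\pi my}+y^{1-\eps}\big)y^{-2}$, which is integrable at the cusp precisely because $\eps>0$. (Equivalently, integrate your automorphy bound in $x$ first, using $\int_0^1E(x+iy,s)\,dx=y^{s}+\varphi(s)y^{1-s}$, to get $\int_0^1|\widetilde\Phi(x+iy)|\,dx\ll y^{-\eps}+y^{1-k+\eps}$; this also shows the Mellin-type integral defining $a_m$ converges absolutely, since then $|a_m(y)|e^{-2\pi my}y^{k-2}\ll y^{-1+\eps}$ near $y=0$.) Note this is where the hypothesis $k>2$ actually enters the projection argument --- your sketch invokes it only for convergence of $P_m$ --- and for $k=2$ this is exactly the point where holomorphic projection requires regularization.
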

		
\begin{proof}[Proof of Theorem~\ref{thm:main1}] 
With our notation for $\Ww_{\kappa,\mu}$ the completed non-holomorphic Eisenstein series $E^{*}_{2k}(z,s)$, $k\in\mathbb{Z}_{\ge0}$,  has the following Fourier expansion (cf.~\cite[p.~210]{M},~\cite[\S3]{OS})
\begin{equation} \label{eq:eisen}
\begin{split}
    E_{2k}^{*}(z,s) =\, &c_{k,s}(y) +
(-1)^{k}\sum_{n\ne 0}\frac{\sigma_{2s-1}(n)}{|n|^s}\Ww_{k,s-1/2}(4\pi ny)e^{2\pi i nx}\,.
\end{split}
\end{equation}
Here 
%\begin{equation*}c_{k,s}(y)= \left\{
%\begin{array}{lr}
%\theta_{k}(s)y^s + \theta_{k}(1-s)y^{1-s} \\
%\ \ \ \ \ \ \ \ \ \ \ \ \ \ \ \ \ \ \ \ \ \ \ \ \ \ \ \ \ \ \ \  \text{for } %s>1/2\,,\\
%\frac{\Gamma \left(k+\frac{1}{2}\right) }{\sqrt{\pi }}\left(\psi \left(k+\frac{1}{2}\right)+2 \gamma  + \log(y)- \log (\pi )\right)\sqrt{y} \\
%\ \ \ \ \ \ \ \ \ \ \ \ \ \ \ \ \ \ \ \ \ \ \ \ \ \ \ \ \ \ \ \ \ \text{for } %s=1/2\,,\end{array} 
%				\right.
%			\end{equation*}
\begin{equation*}c_{k,s}(y)= 
\begin{cases}
\theta_{k}(s)y^s + \theta_{k}(1-s)y^{1-s} \quad & 
 \text{for } s>1/2\,,\\
\frac{\Gamma \left(k+\frac{1}{2}\right) }{\sqrt{\pi }}\left(\psi \left(k+\frac{1}{2}\right)+2 \gamma  + \log(y/\pi )\right)\sqrt{y} 
\quad & \text{for } s=1/2\,,    
\end{cases}
\end{equation*}
where $\theta_k(s)=\pi^{-s}\Gamma(s+k)\zeta(2s)$, 
$\psi(z)$ is the digamma function, and $\gamma$ denotes the Euler–Mascheroni constant. Denote $m_i=r_i/2$ and let 
\[\widetilde{\Phi}(z) = E^{*}_{2k_1}(z,m_1+1/2)E^{*}_{2k_2}(z,m_2+1/2)y^{-k_1-k_2},\]
where we choose the integers $k_1$ and $k_2$ to satisfy $2k_1+2k_2=k=r_1+r_2+2d+2$ and $k_i\ge m_i$. Expanding the product of the Fourier expansions of the two Eisenstein series we see that the coefficients of the holomorphic projection of $\widetilde{\Phi}$ can be calculated as 
			\[a_n=\sum_{n_1+n_2=n} a_{n_1,n_2}\,,\]
where for $n_1n_2\ne 0$ we have
\begin{align*}	 a_{n_1,n_2} 
&= \frac{(4\pi n)^{w-1}}{(w-2)!}\frac{(-1)^{k_1+k_2}\sigma_{r_1}(n_1)\sigma_{r_2}(n_2)}{|n_1|^{m_1+1/2}|n_2|^{m_2+1/2}} \int_{0}^{\infty}\Ww_{k_1,m_1}(4\pi n_1y)\Ww_{k_2,m_2}(4\pi n_2y)e^{-2\pi ny}y^{k_1+k_2-2}dy\\
&=2(-1)^{k_2+m_1}(4\pi)^{k_1+k_2}n^{d}\sigma_{r_1}(n_1)\sigma_{r_2}(n_2)  \frac{d!(d+r_1+r_2)!}{\pi(2d+r_1+r_2)!}Q_{d}^{(r_1,r_2)}\Big(\frac{n_2-n_1}{n}\Big)
\end{align*}
by Lemma~\ref{lem:Wconvolution}. The boundary terms can be  calculated from Lemma~\ref{lem:Wmellin} as
\begin{equation}\label{eq:a_n0}
\begin{split}
& a_{n,0}  = (-1)^{k_2}\frac{(4\pi n)^{k-1}}{(k-2)!}\frac{\sigma_{r_1}(n)}{|n|^{m_1+1/2}} \int_{0}^{\infty}\Ww_{k_1,m_1}(4\pi ny)c_{k_2,m_2+1/2}(y)e^{-2\pi ny}y^{k_1+k_2-2}dy,
\end{split}
\end{equation}
that is,
\begin{equation*}
\begin{split}
 a_{n,0}  &= (-1)^{k_2}\frac{ 2^k \pi^{\tfrac{k}{2}-1} \cos \left(\pi  m_2\right)}{\left(k-2\right)!}
n^{d}\sigma_{r_1}(n) \\ &\qquad \qquad \times \Big(n^{r_2} \zeta \left(r_2\right) \Gamma \left(r_2\right) \Gamma \left(d+1\right) \Gamma
\left(\tfrac{k}{2}+m_1-m_2\right)\\ 
& \qquad \qquad \phantom{\times ( ( } +\zeta \left(-r_2\right) \Gamma \left(-r_2\right) \Gamma \left(\tfrac{k}{2}-m_1+m_2\right) \Gamma \left(r_1+r_2+d+1\right)\Big)
\end{split}
\end{equation*}
where $r_2\neq 0$ and, similarly,
\begin{equation}
\begin{split}
 a_{0,n} %&= (-1)^{k_1}\frac{(4\pi n)^{k-1}}{(k-2)!}\frac{\sigma_{r_2}(n)}{|n|^{m_2+1/2}}\int_{0}^{\infty}\Ww_{k_2,m_2}(4\pi ny)c_{k_1,m_1+1/2}(y)e^{-2\pi ny}y^{k_1+k_2-2}dy\nonumber\\ 
&= (-1)^{k_1} \frac{ 2^k \pi ^{\tfrac{k}{2}-1} \cos \left(\pi 
m_1\right)  }{\left(k-2\right)!}n^{d}\sigma_{r_2}(n)\label{eq:a_0n}\\ 
&\qquad \qquad \times \Big(n^{r_1}
\zeta \left(r_1\right) \Gamma \left(r_1\right) \Gamma
\left(d+1\right) \Gamma
\left(\tfrac{k}{2}-m_1+m_2\right)\\ 
& \qquad \qquad \phantom{\times ( ( }  +\zeta \left(-r_1\right) \Gamma \left(-r_1\right)  \Gamma \left(\tfrac{k}{2}+m_1-m_2\right)\Gamma \left(r_1+r_2+d+1\right)\Big)
\end{split}
\end{equation}
when $r_1\neq 0$. When $r_2=0$, $a_{n,0}$ can be obtained by taking a limit $r_2 \to 0$ in \eqref{eq:a_n0} due to the absolute convergence of the respective integrals: %, changing the order of the operations is a valid procedure:
\begin{align*}a_{n,0}=&\frac{(-1)^{k_2} 2^{k-1} \pi
^{\tfrac{k}{2}-1} n^{\tfrac{k}{2}-m_1-1} \Gamma
\left(\tfrac{k}{2}-m_1\right) \Gamma
\left(\tfrac{k}{2}+m_1\right)}{\left(k-2\right)!}\\ & \ \ \times
\left(H_{k/2-m_1-1}+H_{k/2+m_1-1}
-\log(4 \pi ^2 n)\right)\sigma_{r_1}(n).
\end{align*}
%\begin{equation*}a_{n,0}= -\frac{2^{2 d+2 m_1+1} \pi ^{d+m_1} n^d \Gamma   (d+1) \Gamma \left(d+r_1+1\right) }{\left(2 \left(d+m_1\right)\right)!}\left(H_{d+2 m_1}+H_d-\log \left(4 \pi ^2 n\right)\right)\sigma_{r_1}(n)\end{equation*}
where $H_d = \psi(d+1)+\gamma$
and
\begin{align*}a_{0,n}=&\, 
 \frac{ 4^{d+m_1+1} \pi ^{d+m_1} \cos \left(\pi    m_1\right) n^{d-m_1}\sigma_0(n)}{\left(2  \left(d+m_1\right)\right)!} 
\\  &\qquad \qquad \times ( \Gamma (d+1)^2 n^{2 m_1} \zeta \left(r_1\right) \Gamma \left(2  m_1\right) 
+ \zeta \left(-r_1\right) \Gamma  \left(-r_1\right) \Gamma \left(d+2  m_1+1\right)^2 )  \end{align*}
			Similarly, 
			when $r_1=0$, $a_{0,n}$ can be obtained by taking a limit $r_1 \to 0$ in \eqref{eq:a_0n}.
			When $r_1=r_2=0$, we obtain 
%\begin{align*}
%a_{0,n} = a_{n,0} & =\frac{2 (-1)^{k_2} \pi
%^{\tfrac{k}{2}-\frac{1}{2}}
%n^{\tfrac{k}{2}-1} \Gamma
%\left(\tfrac{k}{2}\right) }{\Gamma
%\left(\tfrac{k-1}{2}\right)} \times \\ &
%\times \left(2
%H_{k/2-1}-\log \left(4 \pi ^2
%n\right)\right)\sigma_{0}(n),
%\end{align*}
%			or equivalently
\begin{equation*}
\begin{split}
a_{0,n} = a_{n,0}& =\frac{2^{2 d+1} (-1)^d \pi ^d n^d \Gamma (d+1)^2 }{(2 d)!}  \left(2 H_d-\log \left(4 \pi ^2	n\right)\right)\sigma_{0}(n).       
\end{split}
\end{equation*}
By the result of Diamantis and O'Sullivan~\cite[Prop.~2.1]{DOS}, for any normalized Hecke eigenform $f\in S_k(\SL_2(\mathbb{Z}))$ we have 
\begin{align*}
 \langle f,\widetilde{\Phi}\rangle & = 2(-1)^{k_2}\pi^kL^{\star}(f,d+1)L^{\star}(f,r_2+d+1)\\
&  = 2(-1)^{k_2+m_2-m_1-d-1}\pi^kL^{\star}(f,d+1)L^{\star}(f,r_1+d+1)\,,
\end{align*}
from which, in combination with the above formulas for $a_{n_1,n_2}$, we recover~\eqref{eq:mainid}.
%\[(-1)^{r_1/2+d+1}L^*(f,r_2+d+1)=(-1)^{r_2/2}L^*(f,r_1+d+1),\]
\end{proof}
\begin{proof}[Proof of Corollary~\ref{cor}]
We recall $n>0$. 
We use the substitution $n_1 = \frac{n}{2} (1- x)$, $n_2= \frac{n}{2} (1+x)$ so that $\varphi(n_1,n_2)$ becomes a function of $n$ and $x$, call it $\varphi_n(x)$. The summation in $\varphi_n$ containing logarithms becomes 
\begin{align*}
%&\log |x+1| P_{n,2}(x) + \log |x-1| P_{n,3}(x) \\
%&= 
\log|\tfrac{x+1}{x-1}| P_{n,2}(x) + \log|x-1| (P_{n,2}(x)+P_{n,3}(x)) ,    
\end{align*}
where $P_{n,2}(x)$ and $P_{n,3}(x)$ are polynomials in $x$ with the degree at most $d$. 
The growth condition $\varphi(n_1,n-n_1)=O(n_1^{-d-r_1-r_2-1})$ for $n_1 \to \pm \infty$ and fixed $n$ implies that $P_{n,2}(x)+P_{n,3}(x)$ vanishes. 
			After substitution, we get 
\[
\varphi_n(x) = \frac{P_{n,1}(x)}{(1-x)^{r_1}(1+x)^{r_2}}   + P_{n,2}(x) \log|\tfrac{x+1}{x-1}|,
\]
where $P_{n,1}(x)$ is a polynomial in $x$ of degree at most $r_1+r_2+d-1$. 
Thus by Proposition \ref{prop4}, the function $\varphi_n(x)$ (considered as a function of $x$ while keeping $n$ fixed) is a constant multiple  of $Q_d^{(r_1,r_2)}(x)$. We recall that the coefficient of $Q_d^{(r_1,r_2)}(\frac{n_2-n_1}{n_1+n_2})$ in front of $\log |n_1|$ is equal to 
\begin{equation*}
\begin{split}
\frac{(-1)^{r_1+1}}{2}&P_d^{(r_1 ,r_2 )}\left(\frac{n_2-n_1}{n_1+n_2}\right) = 
\frac{(-1)^{r_1+1}}{2(n_1+n_2)^d}\sum_{s=0}^d \frac{(-1)^s (d+r_1)!(d+r_2)! n_1^s n_2^{d-s}}{s!(d-s)!(s+r_1)!(d-s+r_2)!}.        
\end{split}
\end{equation*}
Changing the variables $n_2 = n-n_1$, we obtain a polynomial in $\tfrac{n_1}{n}$ with leading coefficient 
\begin{align*}
\frac{(-1)^{r_1+1}}{2} \sum_{s=0}^d (-1)^d &\frac{(d+r_1)!(d+r_2)!}{s!(d-s)!(s+r_1)!(d-s+r_2)!}= \frac{(-1)^{r_1+d+1}}{2} \frac{(r_1 +\beta +2 d)!}{d! (r_1 +r_2 +d)!} \left(\frac{n_1}{n}\right)^d.
\end{align*}
Comparing the leading coefficients of $\varphi_n(x)$ and $Q_d^{(r_1,r_2)}(x)$  we see that 
\[\varphi_n(x) = (-1)^{r_1 + d+1}n^d C_d  \frac{2 d! (r_1+r_2+d)!}{(r_1+r_2+2d)!} Q_d^{(r_1,r_2)}(x),
\]
which fixes the values of $A_j$, $B_j$, $C_j$ and $D_j$ and concludes the proof.
			\end{proof}

\subsection*{Achnowledgements}
The authors would like to thank Daniele Dorigoni, Michael Green, Axel Kleinschmidt, Stephen D. Miller, and Don Zagier for their many helpful conversations. K-L acknowledges support from the NSF through grants DMS-2001909 and DMS-2302309. KF is partially funded by the Deutsche Forschungsgemeinschaft (DFG, German Research Foundation) under Germany's Excellence Strategy EXC 2044-390685587, Mathematics M\"unster: Dynamics-Geometry-Structure. DR acknowledges funding by the European Union (ERC, FourIntExP, 101078782). Views and opinions expressed are those of the author(s) only and do not necessarily reflect those of the European Union or European Research Council (ERC). Neither the European Union nor ERC can be held responsible for them.

\end{document}